\theoremstyle{plain}
\newtheorem{thm}{Thm}[section]
\newtheorem{question}[thm]{Question}
\newtheorem{claim}[thm]{Claim}
\newtheorem{theorem}[thm]{Theorem}
\newtheorem{definition}[thm]{Definition}
\providecommand{\customgenericname}{}
\newcommand{\newcustomtheorem}[2]{%
	\newenvironment{#1}[1]
	{%
		\renewcommand\customgenericname{#2}%
		\renewcommand\theinnercustomgeneric{##1}%
		\innercustomgeneric
	}
	{\endinnercustomgeneric}
}
\newenvironment{proof*}{\noindent \emph{Proof.}}{\hfill$\Diamond$}
\renewcommand{\pod}[1]{\allowbreak\mathchoice
	{\if@display \mkern 0mu\else \mkern 0mu\fi (#1)}
	{\if@display \mkern 0mu\else \mkern 0mu\fi (#1)}
	{\mkern 1mu(\mathrm{mod}\mkern 4mu #1)}
	{\mkern 0mu(#1)}
}
\tikzstyle{vertex}=[circle, draw, fill=black!50,
\tikzset{->-/.style={decoration={
			markings,
			mark=at position .5 with {\arrow{>}}},postaction={decorate}}}
\tikzstyle{bigblue}=[color=blue, very thick, >=stealth]
\tikzstyle{lightblue}=[color=blue, thin, >=stealth]
\tikzstyle{bigred}=[color=red, very thick, >=stealth]
\tikzstyle{lightred}=[color=red, thin, >=stealth]
\tikzstyle{biggreen}=[color=black!30!green, very thick, >=stealth]
\tikzstyle{lightgreen}=[color=black!30!green,  thin, >=stealth]
\title{Linear saturation numbers of Berge-$C_3$ and Berge-$C_4$}
\author{\small Changxin Wang$^1$,  Junxue Zhang$^1$\footnote{Corresponding author} \\ {\small$^1$  Center for Combinatorics and LPMC, Nankai University, Tianjin 300071, China}\\
{\small Emails: Simonang@163.com; jxuezhang@163.com}}
\date{}
\begin{document}\maketitle
\def\aky{empty}
\pagestyle{empty}






\begin{abstract}
The linear saturation number $sat^{lin}_k(n,\mathcal{F})$ (linear extremal number $ex^{lin}_k(n,\mathcal{F})$) of $\mathcal{F}$ is the minimum (maximum) number of hyperedges of
an $n$-vertex linear $k$-uniform hypergraph containing no member of $\mathcal{F}$ as a subgraph, but the addition of  any new hyperedge such that the result hypergraph is still a linear $k$-uniform hypergraph creates a copy of some hypergraph in $\mathcal{F}$.  
Determining $ex_3^{lin}(n$, Berge-$C_3$) is  equivalent to the famous (6,3)-problem,  which has been settled in 1976. Since then, 
determining the linear extremal numbers of Berge cycles was extensively studied. 
As the counterpart of this problem in saturation problems, the problem of determining  the linear saturation numbers of   Berge cycles is considered.  In this paper,
we prove that $sat^{lin}_k$($n$, Berge-$C_t)\ge \big\lfloor\frac{n-1}{k-1}\big\rfloor$ for any integers $k\ge3$, $t\ge 3$, and the equality holds if $t=3$. 
In addition, we provide 
an upper bound for $sat^{lin}_3(n,$ Berge-$C_4)$ and 
for any disconnected Berge-$C_4$-saturated linear 3-uniform  hypergraph, we give a lower bound for the number of hyperedges  of it. 
\\[2mm]

\textbf{Keywords:} Saturation Numbers;   Berge Cycles;  Linear Hypergraphs \\
\end{abstract}

\section{Introduction}

A \emph{hypergraph} $H=(V(H),E(H))$ consists of a vertex set $V(H)$ and a hyperedge set $E(H)$, where each hyperedge $e\in E(H)$ is a subset of $V(H)$. 
A hypergraph is called \emph{$k$-uniform} if each member of $E(H)$ has size exactly $k$ and called \emph{linear} if any two different hyperedges have at most one vertex in common. 

Given a family of $k$-uniform hypergraphs $\mathcal{F}$ and a $k$-uniform hypergraph $H$, we say that $H$ is 
\emph{$\mathcal{F}$-saturated} if $H$ dose not contain any member of $\mathcal{F}$ as a subgraph but the addition of any new $k$-edge creates a copy of some hypergraph in $\mathcal{F}$.
The \emph{extremal number} of $\mathcal{F}$, denoted $ex_k(n,\mathcal{F})$, is defined as 
$$ex_k(n,\mathcal{F})=\max\{|E(H)|: |V(H)|=n \text{ and  H is $\mathcal{F}$-saturated}\},$$
and the 
\emph{saturation number} of $\mathcal{F}$, denoted $sat_k(n,\mathcal{F})$, is defined as 
$$sat_k(n,\mathcal{F})=\min\{|E(H)|: |V(H)|=n \text{ and  H is $\mathcal{F}$-saturated}\}.$$
Given a set $V$ and a positive integer $k$, $\binom{V}{k}$ denotes the set of subsets of $V$ with size exactly $k$. 
We say that $H$ is \emph{linear $\mathcal{F}$-saturated} if $H$ is linear and $H$ dose not contain any member of $\mathcal{F}$ as a subgraph but adding any $e\in \binom{V(H)}{k} \setminus E(H)$ to $H$ such that  $H+e$ is still linear creates a  copy of some hypergraph in $\mathcal{F}$. In this paper, we   confine ourselves to $k$-uniform linear hypergraphs. We only consider to add the hyperedges that can  make the hypergraph  preserve linear $k$-uniform.    
Let $$ex_k^{lin}(n,\mathcal{F})=\max\{|E(H)|: |V(H)|=n \text{ and  H is linear $\mathcal{F}$-saturated}\},$$ and 
$$sat_k^{lin}(n,\mathcal{F})=\min\{|E(H)|: |V(H)|=n \text{ and  H is linear $\mathcal{F}$-saturated}\}.$$
We shall refer to $ex_k^{lin}(n,\mathcal{F})$ as the \emph{linear extremal number} of $\mathcal{F}$ and 
$sat_k^{lin}(n,\mathcal{F})$  as the 
\emph{linear saturation number} of $\mathcal{F}$.


The Berge hypergraph  is one of the most popular kind of hypergraphs in the saturation problems of hypergraphs.  
The classical notions of  a cycle and a path  in  hypergraphs are a Berge cycle and a Berge path. A \emph{Berge cycle} Berge-$C_\ell$ of length $\ell$ is an alternating sequence of distinct vertices and hyperedges of the form $(v_1,e_1,v_2,e_2,\dots ,v_\ell,e_\ell)$, where $v_i,v_{i+1}\in e_i$ for each $1\leq i\leq \ell-1$ and $v_1,v_\ell\in e_\ell$. Let $supp($Berge-$C_\ell$)=$\{v_1,v_2,\dots, v_\ell\}$ be the \emph{support  vertices} of Berge-$C_\ell$. 
A \emph{Berge path} Berge-$P_\ell$ of length $\ell$ is an alternating sequence of distinct vertices and hyperedges of the form $(v_1,e_1,v_2,e_2,\dots ,v_\ell,e_\ell,v_{\ell+1})$, where $v_i,v_{i+1}\in e_i$ for each $1\leq i\leq \ell$.
As a generalization of Berge cycles and Berge paths, the notion of Berge hypergraphs is introduced by Gerbner and Palmer \cite{GDPC2017}. Given a graph $F$, a hypergraph $H$ is a \emph{Berge-F} if there exists a bijection $\phi: E(F) \rightarrow E(H)$ such that $e\in \phi(e)$ for all $e\in E(F)$. 

Saturation numbers for (2-uniform) graphs were first studied by Erd\H{o}s, Hajnal and Moon in \cite{EPHAMJ1964}, where they determined $${sat}_2(n, K_{r+1})=\binom{n}{2}-\binom{n-r+1}{2}$$ for any $2\le r\le n-1$. In 1986, K\'{a}szonyi and Tuza \cite{KLTZ1986} showed that $sat_2(\mathcal{F})=O(n)$ for any finite family of graphs. 
Since then, a large quantity of work in this area has been carried out  about the saturation numbers of disjoint union of cliques and generalized friendship graphs \cite{FFGJ2009},
complete multipartite graphs \cite{BFP10, C14}, cycles \cite{C09,FFL97,FK13, GLS06,LSZ15,O72,T89}, and  paths \cite{KLTZ1986}.  Among these results, one can find that the cycles attracted much attention in the saturation problems. 
We now mention some results for cycles with length less than $n$. Note that  $sat_2(n,C_3)=sat_2(n,K_3)$, which  has been determined in \cite{EPHAMJ1964}. 
In 1972, Ollmann \cite{O72} determined that $$sat(n,C_4)=\bigg\lfloor\frac{3n-5}{2}\bigg\rfloor$$ for any $n\ge 5$, a shorter proof was given by Tuza \cite{T89}. In 2009, Chen proved that $$sat(n,C_5)=\bigg\lceil\frac{10n-10}{7}\bigg\rceil$$ for any $n\ge 21$. Other than these results, no result of short cycles on the exact value is known.


The saturation problems in hypergraphs are also studied extensively. 
Confirming a conjecture of Tuza \cite{T86,T88}, Pikhurko \cite{P99a} proved that $sat_k(n,\mathcal{F})=O(n^{k-1})$ for any finite family $\mathcal{F}$ of $k$-uniform hypergraphs. For any given graph $F$,
it was conjectured   that $sat_k(n,$Berge-$F)=O(n)$ in \cite{ESGPGN2019}, which was confirmed for $3\le k\le 5$ in \cite{ESGPGN2019a}, but is still open in general.  We are also interested in the research of cycles in   hypergraphs. The cycles in hypergraphs are the Begre-cycles.
In 2019, the authors of \cite{ESGPGN2019} determined $sat_k(n,$Berge-$K_3)=\big\lceil\frac{n-1}{k-1}\big\rceil$ for any $n\ge k+1$ and provided some upper bounds for general Berge-cycles. However, there is no any nontrivial lower bound for any Berge-cycles with length more than 3.

In addition to the saturation number of hypergraphs, the linear extremal problems have attracted significant attention. Here we concentrate on the linear extremal number of Berge-$C_3$ and  Berge-$C_4$. 
It is wildly considered that 
determining  the linear extremal number of a Berge-$C_3$ in 3-uniform hypergraphs is equivalent to the famous (6,3)-problem, one of the problems of  Brown, Erd\H{o}s, and S\'os in \cite{BWGEPSV1973}. 
In 1976, Ruzsa and Szemer\'{e}di  \cite{RIZSE1978} proved that 
$$n^{2-\frac{c}{\sqrt{log~ n}}}<ex_3^{lin}(n,\text{Berge-}C_3)=o(n^2)$$
for some constant $c>0$. For Berge-$C_4$, the authors of \cite{LFVJ2003} and \cite{EBGEMA2019} given $\frac{n^{3/2}}{6}+O(n)$ as the lower bound and upper bound for $ex_3^{lin}(n,$Berge-$C_4$), respectively. That is $$ ex_3^{lin}(n,\text{Berge-}C_4)=\frac{n^{3/2}}{6}+O(n).$$
Analogous to the linear extremal problem of Berge-cycles, the following question is a natural dual  of this problem in saturation problems. 

\begin{question}\label{linsat}(Linear saturation problem)
Let $t\ge 3$ and $k\ge 3$. 
What is the minimum number of hyperedges in linear Berge-$C_t$-saturated $k$-uniform hypergraphs? 
\end{question}

In this paper, we investigate the above question and answer 
Question \ref{linsat} for $t=3$ and $k\ge 3$; for $t=4$,  we provide an upper bound and when the linear  Berge-$C_4$-saturated $3$-hypergraph is disconnected, we give a lower bound. Our results are  as follows. 

\begin{theorem}\label{ctlower}
Let $t\ge 3$, $k\ge 3$, and $n\ge k$. Then
$${sat}^{lin}_k(n,\text{Berge-}C_t)\ge  \bigg\lfloor\frac{n-1}{k-1}\bigg\rfloor.$$
\end{theorem}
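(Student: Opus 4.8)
The plan is to show that any linear Berge-$C_t$-saturated $k$-uniform hypergraph $H$ on $n$ vertices has at least $\lfloor (n-1)/(k-1)\rfloor$ hyperedges by a counting argument on how many vertices can be ``covered'' by a given number of hyperedges. The key observation to extract from saturation is the following: if $v$ is a vertex that lies in no hyperedge of $H$, then adding a new hyperedge $e$ through $v$ must create a Berge-$C_t$; in particular $v$ cannot be an isolated vertex unless $H$ is so sparse that \emph{no} hyperedge at all can be added through $v$ without violating linearity. So first I would handle the (near-)trivial regime: if $H$ has very few hyperedges, then there is always room to add a hyperedge through an uncovered vertex while staying linear, and such an addition cannot create a Berge-$C_t$ (a single new hyperedge together with the old ones has too few hyperedges incident to the new vertices, or one can choose the new hyperedge to avoid closing any Berge cycle), contradicting saturation. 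This forces every vertex to be covered once we are below the claimed bound — which is exactly what we need to rule out.

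Next, let $V_0$ be the set of vertices covered by $E(H)$ and suppose $|E(H)| = m$. Since $H$ is linear, a standard double-count gives $|V_0| \le \sum_{e}|e| - (\text{number of ``coincidences''})$; more precisely, build an auxiliary (multi)graph or just argue directly that $m$ hyperedges in a linear hypergraph span at most $m(k-1)+1$ vertices \emph{if} the ``hyperedge intersection structure'' is connected, and in general at most $mk - (m - c)$ where $c$ is the number of connected components of the hyperedge-intersection pattern. The cleanest version: in a linear $k$-uniform hypergraph, if the hyperedges (viewed through their pairwise intersections) form $c$ components covering a set of $v$ vertices using $m$ hyperedges, then $v \le m(k-1) + c$. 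Combining with $v = n$ (all vertices covered, from the previous paragraph) and $c \le m$ would give $n \le m(k-1) + m$... which is too weak, so the real point is that $c$ cannot be as large as $m$: isolated hyperedges (components with a single hyperedge) are themselves problematic for saturation, because one could add a hyperedge meeting such an isolated hyperedge in one vertex and three fresh vertices, again creating no Berge-$C_t$. Ruling out ``too many small components'' is where the saturation hypothesis does the decisive work.

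Concretely, I would argue that in a linear Berge-$C_t$-saturated hypergraph either $n$ is small (base case, checked directly) or the hyperedge-intersection pattern has at most one component that is ``large'' and essentially no room for many tiny components, yielding $n - 1 \le m(k-1)$, i.e. $m \ge \lceil (n-1)/(k-1)\rceil \ge \lfloor (n-1)/(k-1)\rfloor$. The main obstacle I anticipate is precisely the bookkeeping around disconnected saturated hypergraphs: a priori $H$ could have several components, and I must show that even then the total vertex count forces enough hyperedges — the danger is a component on few vertices contributing a bad ratio. The resolution should be that a component spanning $v_i$ vertices with $m_i$ hyperedges satisfies $v_i \le m_i(k-1) + 1$, so summing over $c$ components gives $n \le \sum (m_i(k-1)+1) = m(k-1) + c$; I then need the extra input, from saturation, that we may assume $c = 1$ without loss of generality for the lower bound (any isolated or small component either cannot exist in a saturated hypergraph, or can be used to reduce to a connected instance), which tightens the estimate to $n \le m(k-1) + 1$ and finishes the proof. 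Getting that last reduction airtight — distinguishing which small components are compatible with saturation — is the step that will require the most care.
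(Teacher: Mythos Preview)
Your skeleton is right: the inequality $n \le m(k-1)+c$, with $c$ the number of components of $H$ (isolated vertices included), is exactly what the paper obtains via the bipartite vertex/hyperedge incidence graph. The gap is in how you control $c$. You try first to force every vertex to be covered and then to ``reduce to $c=1$'', but both steps fail: the extremal construction for Berge-$C_3$ in the paper is a star together with up to $k-2$ isolated vertices, so Berge-$C_t$-saturated hypergraphs genuinely can have uncovered vertices and several components. There is no reduction to the connected case.

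What you are missing is the one-line bound the paper uses: $c \le k-1$. If $H$ had $k$ components, choose one vertex from each and let $e$ be the resulting $k$-set. Then $H+e$ is still linear (any old hyperedge lies in a single component and meets $e$ in at most one vertex), and $e$ lies in no Berge cycle of $H+e$ at all, since $e$ is a cut-edge between components; in particular no Berge-$C_t$ is created, contradicting saturation. With $c\le k-1$ in hand, your own inequality gives
\[
m \ \ge\ \frac{n-c}{k-1}\ \ge\ \frac{n-(k-1)}{k-1}\ =\ \frac{n-1}{k-1}-\frac{k-2}{k-1},
\]
and since $m$ is an integer and $0\le \frac{k-2}{k-1}<1$, this yields $m\ge \big\lfloor \tfrac{n-1}{k-1}\big\rfloor$. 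So once you replace the attempted ``$c=1$'' reduction by the correct observation $c\le k-1$, the proof goes through exactly along the lines you sketched, and coincides with the paper's argument.
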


\begin{theorem}\label{c3} 
Let $n\geq 6$ and $k\ge 3$. Then
$${sat}^{lin}_k(n,\text{Berge-}C_3)= \bigg\lfloor\frac{n-1}{k-1}\bigg\rfloor.$$
\end{theorem}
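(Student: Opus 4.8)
\textbf{Proof proposal for Theorem \ref{c3}.}

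The plan is to prove the matching upper bound $sat^{lin}_k(n,\text{Berge-}C_3)\le \lfloor\frac{n-1}{k-1}\rfloor$, since the lower bound is already furnished by Theorem \ref{ctlower}. To do this I would exhibit an explicit linear $k$-uniform hypergraph $H$ on $n$ vertices with exactly $\lfloor\frac{n-1}{k-1}\rfloor$ hyperedges that is linear Berge-$C_3$-saturated. The natural candidate is a ``loose path'' or ``loose star''-type construction: take a special vertex $v$ and build a maximal linear hypergraph in which every hyperedge passes through a small common structure so that any two hyperedges already share a vertex (making Berge-$C_3$ impossible because a Berge-$C_3$ needs three hyperedges and three distinct support vertices with a cyclic incidence pattern that a sunflower-like configuration cannot realize). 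Concretely I would try the hypergraph whose hyperedges are $e_i = \{v\}\cup S_i$ where $S_1,\dots,S_m$ are disjoint $(k-1)$-sets partitioning (most of) $V(H)\setminus\{v\}$, with $m=\lfloor\frac{n-1}{k-1}\rfloor$; leftover vertices (fewer than $k-1$ of them) are absorbed into one of the $S_i$ or left as isolated vertices as the parity of $n-1$ modulo $k-1$ dictates.

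The two things to verify are: (i) $H$ contains no Berge-$C_3$, and (ii) $H$ is saturated, i.e. adding any linear-preserving hyperedge $e$ creates a Berge-$C_3$. For (i): a Berge-$C_3$ requires three distinct hyperedges $e_1,e_2,e_3$ and three distinct support vertices $x_1,x_2,x_3$ with $x_1,x_2\in e_1$, $x_2,x_3\in e_2$, $x_3,x_1\in e_3$. In the star construction all hyperedges meet only at $v$, so two of the three support vertices would have to equal $v$, contradicting distinctness — this rules out Berge-$C_3$ quickly. For (ii): let $e\notin E(H)$ be any $k$-set with $|e\cap e_i|\le 1$ for all $i$. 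I must show $H+e$ has a Berge-$C_3$. The idea is that $e$, being disjoint-or-single-intersecting with every existing hyperedge and having $k\ge 3$ vertices, must contain at least two vertices $x,y$ lying in $(V(H)\setminus\{v\})$; these lie in petals, say $x\in S_i$, $y\in S_j$. If $i\ne j$, then $x\in S_i\subseteq e_i\ni v$ and $y\in S_j\subseteq e_j\ni v$, and together with $e$ we get the Berge-$C_3$ on support vertices $x,y,v$ using hyperedges $e,e_i,e_j$ (need $x,y,v$ distinct and the three hyperedges distinct, which holds). If all of $e$'s non-$v$ vertices lie in a single petal $S_i$, then since $|e\cap e_i|\le 1$ we'd need $|e\cap S_i|\le 1$, so $e$ has at most one non-$v$ vertex, impossible for $k\ge 3$ unless $e$ reaches vertices outside $\bigcup S_i$; those leftover/isolated vertices have to be handled by choosing the construction so there are none, or by a small separate argument.

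The main obstacle I anticipate is the divisibility bookkeeping when $(k-1)\nmid(n-1)$: the clean star construction uses up $1+m(k-1)\le n$ vertices, leaving $r = n-1 - m(k-1)$ leftover vertices with $0\le r\le k-2$. These leftover vertices cannot be covered by any hyperedge without either exceeding the edge budget or violating linearity, so $H$ will have isolated vertices, and I must check that one still cannot add a new hyperedge: any new $e$ would contain at most one vertex from each existing petal and at most... — actually $e$ could collect up to $r$ leftover vertices plus one vertex per petal, so for $k\ge 3$ and $r\ge 1$ a new hyperedge using two leftover vertices, or one leftover vertex and $v$, or one leftover vertex and one petal-vertex, may indeed be addable without creating a Berge-$C_3$. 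This means the pure star on $m$ edges is probably \emph{not} saturated when $r\ge 1$, and I would instead need a slightly cleverer construction for the non-divisible case — e.g. attaching the leftover vertices as a short loose path hanging off $v$ or off one petal, still using only $\lfloor\frac{n-1}{k-1}\rfloor$ edges total, arranged so that every potential new hyperedge is forced to span two hyperedges through a common vertex and thus close a Berge-$C_3$. Pinning down that construction and checking saturation for every type of candidate new hyperedge is the real work; the divisible case $n\equiv 1\pmod{k-1}$ is straightforward by the star argument above.
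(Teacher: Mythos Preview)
Your approach is the same as the paper's: lower bound from Theorem~\ref{ctlower}, upper bound via the star with centre $v$, petals $S_1,\dots,S_m$ of size $k-1$, and $r$ isolated leftover vertices where $r=n-1-m(k-1)\le k-2$. Your verification that the star contains no Berge-$C_3$ is fine, and your saturation argument in the case $x\in S_i$, $y\in S_j$ with $i\ne j$ is exactly right.

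The gap is that your worry about the non-divisible case is a false alarm, and by not resolving it you leave the proof unfinished. The pure star with isolated vertices \emph{is} saturated for every $r$, and the missing observation is a one-line count. Any new $k$-set $e$ contains at most $r\le k-2$ isolated vertices, hence at least two vertices from the star $\{v\}\cup S_1\cup\cdots\cup S_m$. Now linearity does the rest: if $v\in e$ then $e\cap S_i=\emptyset$ for every $i$ (otherwise $|e\cap e_i|\ge 2$), so $e\subseteq\{v\}\cup\{\text{isolated vertices}\}$ has size at most $1+r\le k-1<k$, impossible. Hence $v\notin e$, the two guaranteed star vertices lie in distinct petals $S_i,S_j$ (again by linearity, since $|e\cap e_i|\le 1$), and $(x,e,y,e_j,v,e_i)$ is the Berge-$C_3$ you already described. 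The scenarios you list --- ``two leftover vertices'', ``one leftover vertex and $v$'', ``one leftover vertex and one petal-vertex'' --- all still force at least two petal vertices into $e$ once you use $|e|=k>r+1$. No cleverer construction is needed; the paper's proof is exactly this (stated tersely as ``it contains at least two vertices from the star, which will create a Berge-$C_3$'').
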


For a Berge-$C_4$, we have the following results in $3$-uniform hypergraphs. 
\begin{theorem}\label{upc4} 
Let $n\geq 1$.
Then
$$\operatorname{sat}^{lin}_3(n, \text{Berge-}C_4)\leq \left\{
\begin{array}{lcl}
	\lfloor\frac{5(n-1)}{6}\rfloor, n\equiv1, 15, 16    &&{\pmod{18}};\\
	\lfloor\frac{5(n-2)}{6}\rfloor, n\equiv0, 2, 3, 4, 5, 6, 7, 17 &&{\pmod{18}};\\
	\lfloor\frac{5(n-3)}{6}\rfloor, n\equiv8, 9, 10, 11, 12, 13    &&{\pmod{18}};\\
	\lfloor\frac{5(n-4)}{6}\rfloor, n\equiv14    &&{\pmod{18}}.\\
\end{array}\right.
$$
\end{theorem}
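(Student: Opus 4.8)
The plan is to construct, for every residue class of $n \pmod{18}$, an explicit linear $3$-uniform hypergraph that is Berge-$C_4$-saturated and has at most the claimed number of hyperedges. The natural building block is a small, dense Berge-$C_4$-saturated ``gadget'' on a constant number of vertices whose edge-count-to-vertex ratio is close to $5/6$; the periodicity modulo $18$ strongly suggests that the core gadget lives on $18$ vertices and uses $15$ hyperedges, and that the four cases correspond to how many ``leftover'' vertices (namely $1,2,3$, or $4$ after subtracting the part that is packed perfectly) must be attached by low-cost local modifications. So first I would exhibit this base gadget: a linear triple system on roughly $18$ points, verify by a finite check that it contains no Berge-$C_4$, and verify that adding any linear triple creates one. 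Then I would describe how to glue $\lfloor (n-c)/18 \rfloor$ disjoint copies together — or, more likely, chain them through shared vertices in a tree-like fashion so the whole hypergraph stays connected where needed and still has no Berge-$C_4$ — and handle the remainder with a short ad hoc construction for each residue.

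The key steps, in order, are: (1) define the gadget $G_0$ and the attachment rules precisely; (2) prove $G_0$ (and each remainder configuration) is Berge-$C_4$-free — here I would argue that any Berge-$C_4$ uses four distinct hyperedges and four distinct support vertices, and in a linear system a Berge-$C_4$ forces a fairly rigid ``quadrilateral'' pattern among the triples (each consecutive pair meeting in exactly one vertex, those meeting points being the four supports), so a careful case analysis on how triples can pairwise intersect rules it out; (3) prove saturation — given any non-edge $e = \{x,y,z\}$ whose addition keeps the system linear, show a Berge-$C_4$ appears, by locating in $G_0$ three further triples realizing a cycle $(x, e, y, \cdot, w, \cdot, v, \cdot)$; this is where the gadget must be ``dense enough'' that every pair of vertices is close to many triples; (4) assemble the global hypergraph by disjoint union / tree-gluing and check that no Berge-$C_4$ spans two gadgets (if gadgets share at most one vertex, a Berge-$C_4$ cannot cross, since it would need two vertex-disjoint ``crossings''), while saturation across gadgets follows because adding any cross triple $e$ already has two of its vertices inside a single gadget for at least one pair, or creates a short Berge path that closes up; (5) tally the edges for each residue and simplify to the stated floor expressions.

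The main obstacle I expect is step (3) combined with the gluing in step (4): designing a single finite gadget that is simultaneously Berge-$C_4$-free \emph{and} saturated is delicate, because saturation wants many triples (to catch every new edge) while $C_4$-freeness wants few, and the $5/6$ ratio says we are operating near the extremal trade-off. In particular, ensuring saturation for non-edges $e$ that straddle a gluing vertex — where one endpoint lies in one copy of $G_0$ and the other two in another — will require the gluing to be done at carefully chosen ``hub'' vertices of $G_0$, and verifying this may force several sub-cases depending on whether $e$ meets the hub. A secondary nuisance is the bookkeeping for the four residue cases and the small values of $n$ (the theorem claims it already for $n \ge 1$), which will need separate tiny constructions; these are routine but must be listed. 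Once the gadget and its two finite verifications are in hand, the rest is packing arithmetic, so I would invest essentially all the effort in pinning down $G_0$ and its hub structure.
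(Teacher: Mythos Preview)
Your proposal is correct and follows essentially the same strategy as the paper: a $19$-vertex gadget $T^*$ with $15$ hyperedges (each copy contributing $18$ new vertices when glued at a common hub $v_0$), remainder gadgets $T'_i$ for each residue class, the cut-vertex argument for Berge-$C_4$-freeness across glued copies, and the pigeonhole ``two vertices land in one piece'' argument for cross-saturation. The one simplification you have not yet isolated is that saturation is implied by the single property of being \emph{Berge-$P_3$-connected} (every pair of nonadjacent vertices joined by a Berge-$P_3$), which collapses your step~(3) to checking $P_3$-connectivity of each piece and verifying it is preserved under hub-gluing --- exactly the lemma the paper extracts and reuses throughout.
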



%


\begin{theorem}\label{lwc4} Let 
$H$ be a  Berge-$C_4$-saturated linear $3$-uniform hypergraph with $n\ge 6$ vertices  and be disconnected. 
For any component $H'$ of $H$ with $n'$ vertices, 
then $$e(H')\geq \begin{cases}
	0, &\text{ when $\delta(H')= 0$;}\\
	\frac{2n'}{3} -2,  &\text{ when $\delta(H')= 1$;}\\
	\frac{13n'-29}{18}, &\text{ when $\delta(H') = 2$;}\\
	n', &\text{ when $\delta(H')\ge 3$.}
\end{cases} $$
Furthermore, we have $ e(H)\ge \frac{2n }{3}-4.$	
\end{theorem}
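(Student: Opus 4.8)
The plan is to prove the four per-component inequalities and then read off $e(H)\ge\frac{2n}{3}-4$ as a corollary. The engine is a structural lemma that I would establish first: in every component $H'$ of $H$, any two distinct vertices $u,w\in V(H')$ not lying in a common hyperedge are joined by a Berge path of length exactly $3$ inside $H'$. Indeed, since $H$ is disconnected there is a vertex $z$ in another component, and the triple $\{u,w,z\}$ obeys linearity (no two of its vertices share a hyperedge), so adding it creates a Berge-$C_4$; this cycle must use the new edge $f=\{u,w,z\}$. The two consecutive support vertices of the cycle that lie in $f$ are joined, around the cycle, by a Berge path through three \emph{old} edges, hence within one component; since $z$ is in a different component from $u$ and $w$, those two support vertices must be $u$ and $w$ themselves, and the other three edges of the cycle give the claimed Berge-$P_3$ in $H'$. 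The very same construction with vertices from three distinct components shows $H$ has at most two components, since such a triple could always be added yet its addition could create no Berge-$C_4$.

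The regimes $\delta(H')=0$ and $\delta(H')\ge 3$ are immediate: in the first case $H'$ is a single vertex and $e(H')=0$; in the second, $3e(H')=\sum_{v\in V(H')}\deg(v)\ge 3n'$ gives $e(H')\ge n'$. For $\delta(H')=1$ I would run a discharging/counting argument on degree-one vertices. Since $3e(H')=2n'-\sum_v(2-\deg(v))$ and only degree-one vertices make $2-\deg(v)$ positive, the target $e(H')\ge\frac{2n'}{3}-2$ amounts to bounding the number of degree-one vertices by $6$ plus the total excess $\sum_{\deg(v)\ge 3}(\deg(v)-2)$. A degree-one vertex $v$ lying in a hyperedge $\{v,a,b\}$ can only begin a Berge-$P_3$ by using that hyperedge and leaving through $a$ or $b$, and by the lemma two degree-one vertices in distinct hyperedges must have the remaining vertices of their hyperedges linked by a further hyperedge; analysing these constraints shows that they cannot all be met unless the degree-one vertices are few or are absorbed by higher-degree vertices, which is precisely the inequality wanted.

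The case $\delta(H')=2$ is the crux and, I expect, the main obstacle. Now $3e(H')=2n'+\sum_v(\deg(v)-2)$, so $e(H')\ge\frac{13n'-29}{18}$ is equivalent to $\sum_v(\deg(v)-2)\ge\frac{n'-29}{6}$, i.e.\ to showing that at most $\frac{5n'+29}{6}$ vertices have degree $2$. Each degree-two vertex lies in exactly two hyperedges meeting only at it, hence has a four-vertex neighbourhood, and the lemma forces, for every two degree-two vertices without a common hyperedge, a hyperedge joining their neighbourhoods. The work is a careful local analysis of how many degree-two vertices can cluster around a single hyperedge and how efficiently a hyperedge can serve as such a link; bounding the small exceptional configurations is what produces the additive constant $29$, and the hypothesis $n\ge 6$ enters to rule out tiny degenerate components. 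This is the step I would budget the most effort for.

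Finally, the additive bound: $H$ is disconnected and has at most two components, hence exactly two, say $H_1$ and $H_2$ with $n_1+n_2=n$. In every one of the four cases above $e(H_i)\ge\frac{2n_i}{3}-2$ (this is the weakest of the four bounds and holds even for a single-vertex component), so $e(H)=e(H_1)+e(H_2)\ge\frac{2n}{3}-4$.
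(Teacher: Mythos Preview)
Your structural lemma (Berge-$P_3$-connectivity of each component), the bound on the number of components, and the cases $\delta(H')=0$ and $\delta(H')\ge 3$ are all correct and match the paper exactly; so does the final derivation of $e(H)\ge\frac{2n}{3}-4$. Your arithmetic reformulations for $\delta(H')=1$ (need $|\{v:\deg v=1\}|\le 6+\sum_{\deg v\ge 3}(\deg v-2)$) and for $\delta(H')=2$ (need $\sum_v(\deg v-2)\ge\frac{n'-29}{6}$) are also correct.

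The genuine gap is in how you propose to \emph{obtain} those two inequalities. ``Analysing these constraints shows\dots'' and ``a careful local analysis of how many degree-two vertices can cluster'' are placeholders, not arguments, and the specific constants $6$ and $29$ do not fall out of any local consideration you have described. For $\delta(H')=1$, the observation that two degree-one vertices in distinct hyperedges have their neighbourhoods linked by a further hyperedge is correct, but it does not by itself bound how many such vertices there are: many degree-one vertices can share linking hyperedges, and ruling that out is exactly where Berge-$C_4$-freeness must do real work. For $\delta(H')=2$ the difficulty is worse, since degree-two vertices are generic and there is no local obstruction to having many of them.

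The paper takes a quite different, more global route for both hard cases. It passes to the auxiliary $2$-graph $H'_0$ (each hyperedge a triangle), roots a breadth-first tree at a minimum-degree vertex $v_0$, and writes $3e(H')=e(H'_0)=|R|+|T|+e(H'_2)$ where $R$ is one removed edge per hyperedge, $T$ is the BFS tree, and $H'_2$ is what remains. Since $|R|=e(H')$ and $|T|=n'-1$, the whole problem reduces to lower-bounding $e(H'_2)$. This is done by splitting the distance-$2$ and distance-$3$ layers $Y,Z$ into many sub-classes according to the \emph{shapes} of hyperedges through them ($XYY$, $YYY$, $YYZ$, $YZZ$, $ZZZ$), and summing pieces of the form $e(H'_2[X,Y])$, $e(H'_2[Y])$, $e(H'_2[Y,Z])$, $e(H'_2[Z])$. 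The constant $29$ comes out as $9\cdot 1+4\cdot 5$ from $|T|=n'-1$ and $|\{v_0\}\cup X|=5$; the constant $6$ in the $\delta=1$ case comes from a separate structural claim that a particular subset $Z_{11}\subseteq Z$ of degree-one vertices has size at most $6$, proved by explicitly building a Berge-$C_4$ whenever $|Z_{11}|\ge 8$. None of this machinery is visible in your plan, and I do not see how a purely local discharging argument recovers these constants without essentially rediscovering the layer decomposition.
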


The rest of the paper is outlined as follows. 
In Section \ref{s2}, we give some basic notations and preliminaries. We devote Section \ref{s3} to prove Theorems \ref{ctlower} and \ref{c3}. 
An upper bound of the linear saturation number of Berge-$C_4$ is given by constructing a Berge-$C_4$-saturated linear 3-uniform hypergraph in Section \ref{s4}. Finally we obtain a lower bound of the number of hyperedges for  any  disconnected Berge-$C_4$-saturated linear $3$-uniform hypergraph in Section \ref{s5}.   

\section{Preliminaries}\label{s2}
Let us show some definitions and symbols which will be used in the proof. For a hypergraph $H$, let $\delta_H$ denote the minimum vertex degree of $H$. Denote by $[n]$ the  integer set $\{1,2,\dots,n\}$. For $X\subset V(H)$, denote by $H[X]$  the induced hypergraph of $H$, whose vertex set is $X$ and whose hyperedge set is the set of those hyperedges of $H$ that have all vertices in $X$. For a graph $G$ with  $X,Y\subset V(G)$, let $G[X,Y]$ be  a bipartite graph,  whose vertex set is $X\cup Y$ and whose  edge set is the set of those  edges of $G$ that have one vertex in $X$ and the other one  in $Y$.
\begin{definition}
\rm 	A linear $k$-uniform hypergraph is a star if   all hyperedges of it  intersect at exactly one vertex.
\end{definition}
\begin{definition}\label{P3connect}
\rm 	A linear $k$-uniform hypergraph is Berge-$P_\ell$-connected if there is a Berge-$P_\ell$ between any two nonadjacent  vertices.
\end{definition}
By Definition \ref{P3connect}, if a connected hypergraph consists of   an  isolated vertex or   one hyperedge, then it is   Berge-$P_\ell$-connected.

\begin{definition}\label{identify}
\rm	Given two hypergraphs $G$ and $H$ (it is possible that $G=H$), denote by $G\cdot H(v)$ the hypergraph obtained from the disjoint union $G\cup H$ by identifying the two vertices $v$ in $G$ and $H$ as one. For $H=G$, let $G\cdot G(v)=2G(v)$. 
\end{definition}
\rm By Definition \ref{identify}, we have  $V(G\cdot H(v))=(V(G)\setminus\{v\})\cup V(H)$, $|V(G\cdot H(v))|=|V(G)|+|V(H)|-1$ and $E(G\cdot H(v))=E(G)\cup E(H)$. 

\begin{definition}\label{identifyk}
\rm	For any integer $k\ge 0$, let $kG\cdot H(v)$ be the hypergraph obtained from $(\cup_{0\le i\le k}G)\cup H$ by identifying the $k+1$  vertices $v$ in $k$ copies of $G$ and $H$ as one. For $k=0$, let $kG\cdot H(v)=H$. 
\end{definition}
We have  $V(kG\cdot H(v))=\big[\cup_{0\le i\le k}(V(G)\setminus\{v\})\big]\cup V(H)$, $|V(kG\cdot H(v))|=k|V(G)|+|V(H)|-k$ and $E(kG\cdot H(v))=(\cup_{0\le i\le k}E(G))\cup E(H)$.

\section{The proofs of Theorems \ref{ctlower} and \ref{c3}}\label{s3}
\noindent{\bf Proof of Theorem \ref{ctlower}.}
Let $H$ be a Berge-$C_t$-saturated linear $k$-uniform hypergraph with $\ell$  components, denoted by $H_i$, for $i\in [\ell]$. We have  $\ell \leq k-1$, otherwise if there are $k$ components, we can choose a vertex from each component to form a hyperedge, the addition of which will not create a copy of Berge-$C_t$, a contradiction. 
To obtain a lower bound of $\operatorname{sat}^{lin}_k$($n$, Berge-$C_t$), we consider a component of $H$. 

\begin{claim}\label{component}
Each component of $H$ has at least $\frac{n-1}{k-1}$ hyperedges.
\end{claim}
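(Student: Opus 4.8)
The plan is to establish the claim one component at a time, arguing entirely inside a single arbitrary component $H'$ of $H$ and producing the required hyperedges there directly, rather than adding up contributions from several components. Write $n'=|V(H')|$ and $e'=e(H')$; the substance of the claim is the per-component estimate $e'\ge \frac{n'-1}{k-1}$, where the relevant vertex count is the order of the component under consideration. The engine is the tension between connectivity and linearity: a connected $k$-uniform hypergraph cannot cover many vertices with few hyperedges, because every hyperedge after the first must reuse a vertex to stay connected.

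First I would dispose of the degenerate case in which $H'$ is a single isolated vertex: then $n'=1$, $e'=0$, and the bound reads $0\ge 0$. So assume $H'$ has at least two vertices; being a component it is connected, every vertex lies in a hyperedge, and $e'\ge 1$. Next I would order the hyperedges of $H'$ as $f_1,f_2,\dots,f_{e'}$ so that, for each $j\ge 2$, the hyperedge $f_j$ meets $\bigcup_{i<j}f_i$ in at least one vertex; such a connected ordering exists because the intersection (line) structure of a connected hypergraph is connected, so a search through it reaches the edges in such an order. Counting newly covered vertices along this ordering, $f_1$ contributes exactly $k$ vertices and each later $f_j$ contributes at most $k-1$ new vertices, since it shares at least one vertex with the edges already listed. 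Hence $n'\le k+(e'-1)(k-1)=e'(k-1)+1$, which rearranges to $e'\ge \frac{n'-1}{k-1}$ and proves the claim for $H'$. As $H'$ was arbitrary, every component obeys this estimate.

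I would then record, for the deduction of Theorem~\ref{ctlower}, how the global bound follows: summing over the $\ell$ components gives $e(H)\ge \frac{n-\ell}{k-1}$, and combining this with the already-established inequality $\ell\le k-1$ yields $e(H)\ge \frac{n-(k-1)}{k-1}>\frac{n-1}{k-1}-1\ge \big\lfloor\frac{n-1}{k-1}\big\rfloor-1$; since $e(H)$ is an integer this forces $e(H)\ge \big\lfloor\frac{n-1}{k-1}\big\rfloor$. Note that this is exactly the place where the saturation hypothesis enters, namely through $\ell\le k-1$ (one cannot add a transversal hyperedge meeting $k$ distinct components without creating no new Berge-$C_t$), whereas the per-component estimate itself uses only linearity and connectivity.

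The step I expect to be the main obstacle is the reconciliation between the per-component count, which naturally involves only the order $n'$ of the individual component, and the global target. If one insists on the literal reading with the global $n$ in place of $n'$, the estimate is in general too strong: for $t=3$ a disjoint union of two linear $k$-uniform stars is Berge-$C_3$-saturated, yet each of its two components carries only $\frac{n'-1}{k-1}<\frac{n-1}{k-1}$ hyperedges, so no argument can force a single component to account for all $n$ vertices. Consequently the provable and sufficient form is the per-component estimate in the component's own order, and the passage to $\big\lfloor\frac{n-1}{k-1}\big\rfloor$ must lean on both $\ell\le k-1$ and the integrality of $e(H)$ to round the slightly deficient sum $\frac{n-\ell}{k-1}$ back up to the floor. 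A secondary point to treat explicitly is the boundary behaviour of small components (order less than $k$, in particular isolated vertices) and the existence of the connected ordering, both of which are routine but should be stated so the degenerate cases are covered.
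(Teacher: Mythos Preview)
Your proof is correct, and you are right that the literal statement with the global $n$ is too strong: the paper's own argument actually establishes $m'\ge\frac{n'-1}{k-1}$ for a component on $n'$ vertices with $m'$ hyperedges, which is precisely what you prove and then sum. Your route to the per-component bound differs from the paper's. The paper forms the bipartite incidence graph $G'$ on vertex classes $A=E(H')$ and $B=V(H')$, observes that $G'$ is connected (so has at least $n'+m'-1$ edges), and compares this with the exact edge count $|E(G')|=km'$ to obtain $km'\ge n'+m'-1$. You instead take a connected ordering $f_1,\dots,f_{e'}$ of the hyperedges and bound the number of newly covered vertices at each step by $k-1$. Both arguments are elementary and deliver the identical inequality; the paper's version packages the connectivity step as the standard tree bound $|E|\ge|V|-1$ on an auxiliary graph, while yours is more direct and never leaves $H'$. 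Your passage from the per-component estimate to the global floor via $\ell\le k-1$ and integrality is the same as the paper's in substance, just written slightly differently.
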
  
\noindent{\textit{Proof.}} Let us choose a component $H'$ with $n'$ vertices and $m'$ hyperedges. If $m'=0$, then $H'$ is an isolated vertex, Claim \ref{component} holds.
If $m'\geq 1$, we construct a bipartite graph $G'=(A,B)$ with $|A|=m$ and $|B|=n$, where $A=\{e: e\in E(H')\}$, $B=\{v: v\in V(H')\}$ and $E(G')=\{ev: e\in A$ and $v\in B$ with $v\in e$\}. 
For any two vertices of $B$, there is  a path between them  in $G'$, by the connectivity of $H'$.  For any $e\in A$, there exists some vertex $v\in B$ such that $e$ and $v$ are adjacent in $G'$. Since any two vertices in $B$ has a path between them in $G'$, there is a path between $e$ and $u$ for any $u\in B$. It follows that there is a path between $e$ and $e'$ for any $e,e'\in A$. Thus $G'$ is connected, and so  $G'$ has at least $|V(G')|-1=n'+m'-1$ edges. By the definition of $G'$, the number of edges of $G'$ is $km'$.
Hence, $km'\geq n'+m'-1$ and   $m'\geq \frac{n'-1}{k-1}.$ 
$\hfill\qedsymbol$	

For  each component $H_i$ with $n_i$ vertices and $m_i$ hyperedges, we have $m_i\geq \frac{n_i-1}{k-1}$ by Claim \ref{component}. The number of hyperedges of $H$ is $m=m_1+m_2+\dots +m_\ell$. We can see $$m_1+m_2+\dots +m_\ell\geq\frac{n_1-1}{k-1}+\frac{n_2-1}{k-1}+\dots +\frac{n_\ell-1}{k-1}=\frac{n-1-(\ell-1)}{k-1}\ge \frac{n-1-(k-2)}{k-1}.$$
Let  $n-1\equiv y {\pmod {(k-1)}}$. Then $\frac{n-1-(k-2)}{k-1}=\lfloor\frac{n-1}{k-1}\rfloor+\frac{y-(k-2)}{k-1}.$ Since $0\le y\le k-2$, $\lceil\frac{y-(k-2)}{k-1}\rceil=0.$ Note that $m\geq \lfloor\frac{n-1}{k-1}\rfloor+\frac{y-(k-2)}{k-1}$ and $m$ is an integer, then $m\geq \lfloor\frac{n-1}{k-1}\rfloor+\lceil\frac{y-(k-2)}{k-1}\rceil=\lfloor\frac{n-1}{k-1}\rfloor.$  Hence,  $\operatorname{sat}^{lin}_k$($n$, Berge-$C_t) \ge \lfloor\frac{n-1}{k-1}\rfloor.$ 
$\hfill\qedsymbol$

\medskip

We have completed the proof of Theorem \ref{ctlower}. Next we prove Theorem \ref{c3}. 

\medskip

\noindent{\bf Proof of Theorem \ref{c3}.}
By Theorem \ref{ctlower}, we have $\operatorname{sat}^{lin}_k$($n$, Berge-$C_3) \ge \lfloor\frac{n-1}{k-1}\rfloor.$ It remains  to prove $\operatorname{sat}^{lin}_k$($n$, Berge-$C_3) \le \lfloor\frac{n-1}{k-1}\rfloor.$
For the upper  bound,  we construct a hypergraph $G$ consisted of a star with $\lfloor\frac{n-1}{k-1}\rfloor$ hyperedges and $t$ isolated vertices, where  $n-1\equiv t\pmod {k-1}$ and $t\le k-2$. 
Note that $G$ contains no Berge-$C_3$ and if we add a new hyperedge, then it contains at least two vertices from the star, which will create a Berge-$C_3$.
Thus $G$ is Berge-$C_3$-saturated with $\lfloor\frac{n-1}{k-1}\rfloor$ hyperedges. Then $\operatorname{sat}^{lin}_k$($n$, Berge-$C_3$)$\le e(G)= \lfloor\frac{n-1}{k-1}\rfloor.$ Finally, we have	$${sat}^{lin}_k(n,\text{Berge-}C_3)= \bigg\lfloor\frac{n-1}{k-1}\bigg\rfloor.$$
$\hfill\qedsymbol$
\newpage
\section{The proof of Theorem \ref{upc4}}\label{s4}
To prove the theorem, we need to construct a  Berge-$C_4$-saturated linear 3-uniform hypergraph with $n$ vertices. 
If a hypergraph $H$ is Berge-$P_3$-connected and contains no Berge-$C_4$, then $H$ is Berge-$C_4$-saturated by the definition of Berge-$C_4$-saturated hypergraphs. 
We construct a hypergraph $T^*$ with 15 hyperedges and $19$ vertices in Figure \ref{T*}. 
We first show $T^*$ contains no Berge-$C_4$. 
By contradiction, suppose  
there is a Berge-$C_4$ in $T^*$, denoted by $\mathcal{C}_4$. 
Let $U=\{v_2,v_4,v_6,v_8,v_{10},v_{12},v_{14},v_{16},v_{18}\}$ and $W=\{v_1,v_3,v_5,v_7,v_{9},v_{11},v_{13},v_{15},v_{17}\}$.

\begin{figure}[H]
\centering{
	\begin{minipage}[t]{0.48\textwidth}
		\centering
		\includegraphics[scale=0.60]{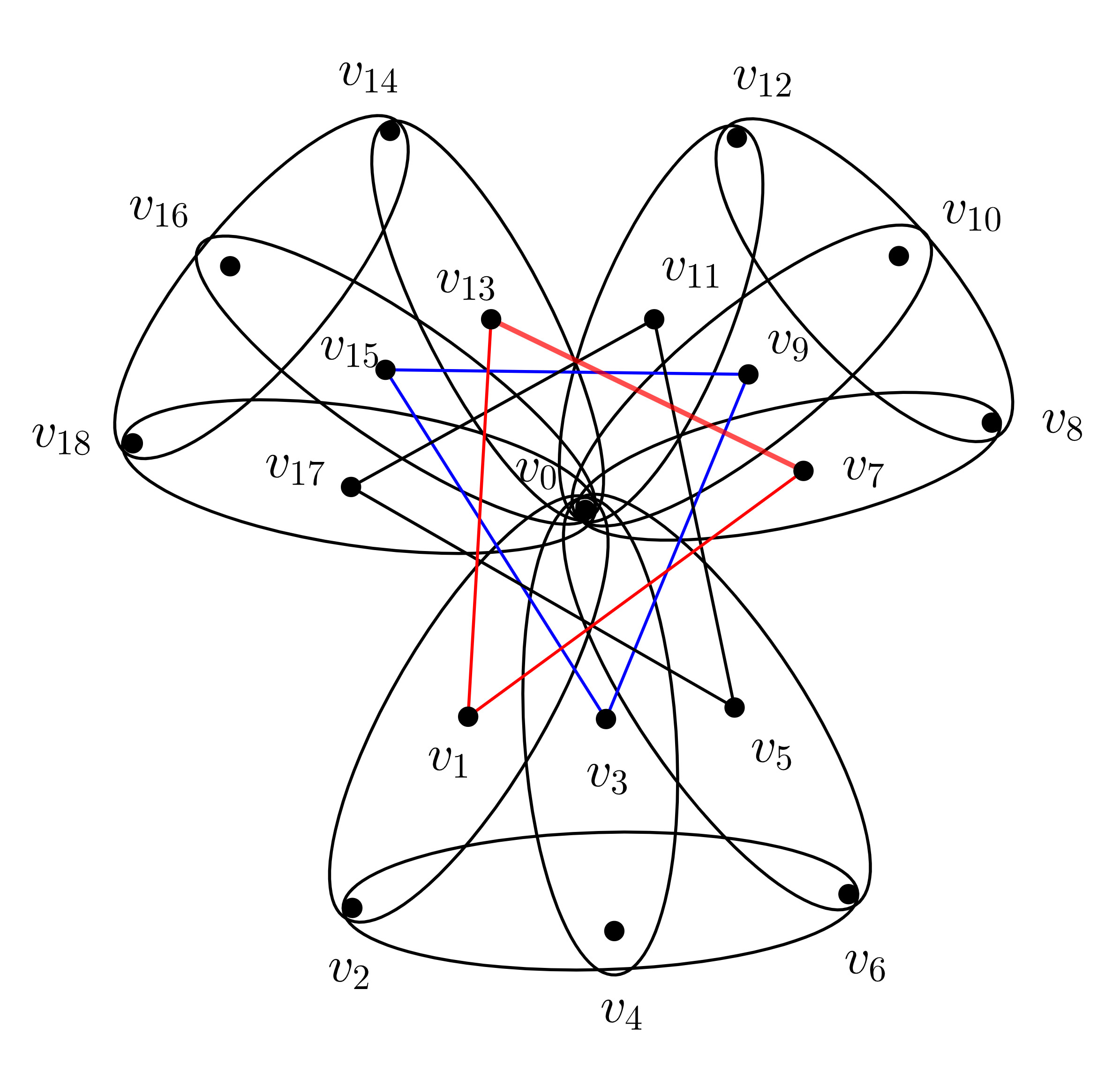}
		\caption*{$T^*$} 
	\end{minipage}
	\begin{minipage}[t]{0.38\textwidth}
		\centering
		\includegraphics[scale=1.0]{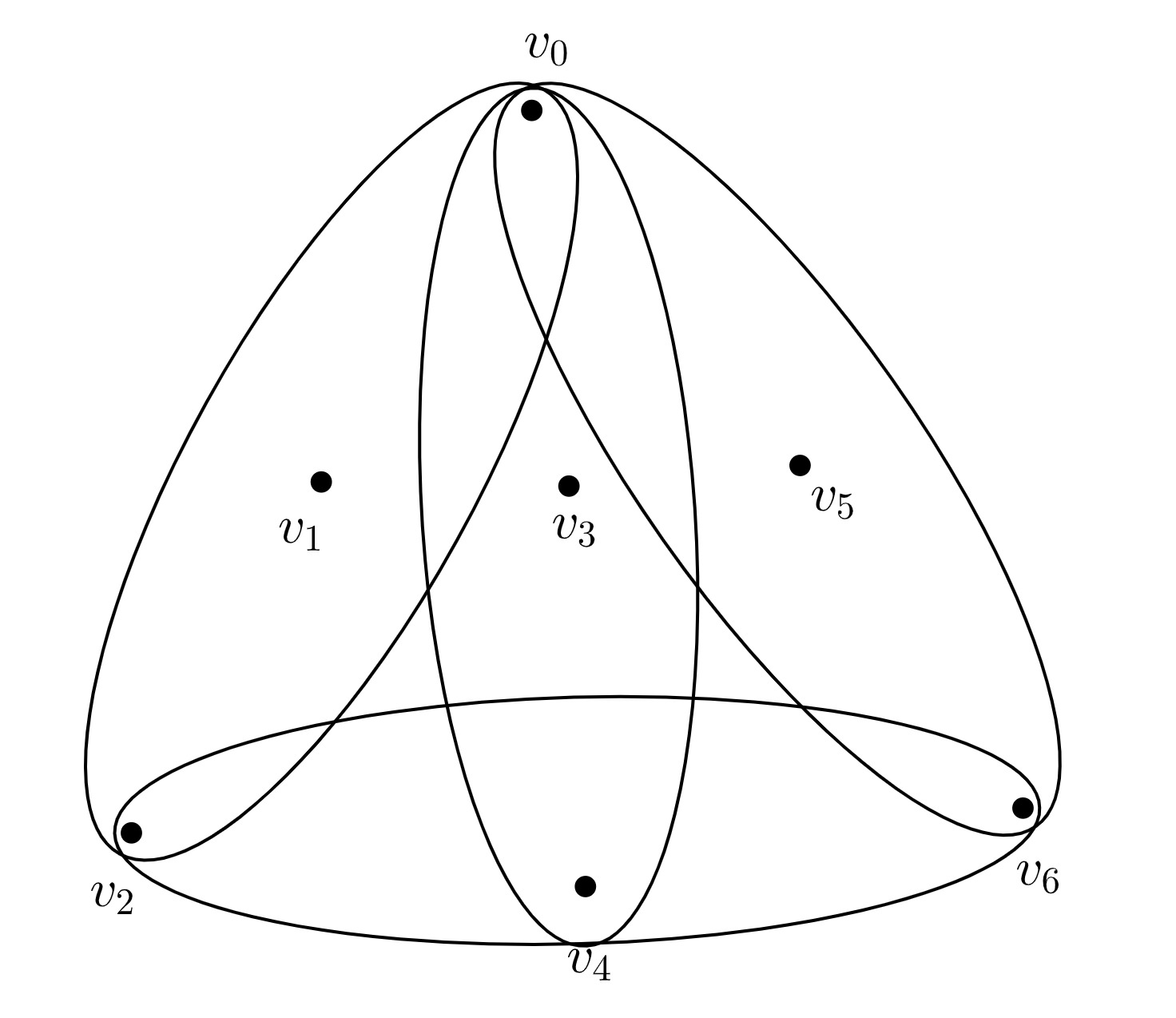}
		\caption*{$T'$} 
	\end{minipage}  
}
\caption{The hypergraphs $T^*$ and $T'$ }\label{T*}
\end{figure}

First, we  consider the case   $v_2\in supp( \mathcal{C}_4)$ as the first support vertex. Since $d(v_2)=2$, the two hyperedges in the $\mathcal{C}_4$ containing $v_2$ are 
$\{v_2,v_4,v_6\}$ and $\{v_2,v_1,v_0\}$. The second support vertex is $v_4$ or $v_6$ and the fourth support vertex is $v_1$ or $v_0$. Without loss of generality, we choose $v_4$ as the second support vertex.   Then the second hyperedge is $\{v_4,v_3,v_0\}$ by $d(v_4)=2$. The third support  vertex is $v_3$ or $v_0$. But there is no new hyperedge containing the third and fourth support vertices as the hyperedges in the sequence of Berge-$C_4$ are distinct. Hence, the vertex $v_2\notin supp(\mathcal{C}_4)$. 
By the symmetry of $T^*$, we have $U\cap supp(\mathcal{C}_4)=\emptyset$ 
and so
the hyperedges likes $\{v_2,v_4,v_6\},\{v_8,v_{10},v_{12}\},$ and $\{v_{14},v_{16},v_{18}\}$ are not in   $\mathcal{C}_4$.
Next we assume $v_1\in supp(\mathcal{C}_4)$ as the first support vertex. By $d(v_1)=2$, 
the two  hyperedges containing it are $\{v_0,v_1,v_2\}$ and $\{v_1,v_7,v_{13}\}$. The second support vertex is $v_0$ by $v_2\notin supp(\mathcal{C}_4)$. By symmetry, we may assume that 
the fourth support  vertex is $v_7$. It follows that 
the third hyperedge is $\{v_0,v_7,v_8\}$ from $d(v_7)=2$, and the third support vertex is exactly $v_8$, which contradicts $v_8\notin supp(\mathcal{C}_4)$.  Then $v_1\notin supp(\mathcal{C}_4)$ and $W\cap supp(\mathcal{C}_4)=\emptyset$ by symmetry. 
We can see   $T^*$ contains no copy of Berge-$C_4$. 

We construct a hypergraph $T'$ in Figure \ref{T*}. Since $T'$ is a subgraph of $T_1$, $T'$ contains no Berge-$C_4$. Observe that $v_1$ has a Berge-$P_3$ to $v_3$, that is, $(v_1,\{v_0,v_1,v_2\}, v_2,\{v_2,v_4,v_6\},v_4,\\\{v_4,v_3,v_0\},v_0)$ and $v_1$ has a Berge-$P_3$ to $v_4$, that is,  $(v_1,\{v_0,v_1,v_2\}, v_0,\{v_0,v_5,v_6\},v_6,\\\{v_6,v_4,v_2\},v_4)$. 
By symmetry, $T'$ is Berge-$P_3$-connected. Hence, $T'$ is Berge-$C_4$-saturated.
Let us turn to $T^*$. We can see that $v_0$ is adjacent to all other vertices in $T^*$ and has a Berge-$P_2$ to them. 
Sicne $T'$ is Berge-$P_3$-connected, $T^*$ is Berge-$P_3$-connected and so   $T^*$ is Berge-$C_4$-saturated.

Let $T_1=\big\lfloor\frac{n}{18}\big\rfloor T^*(v_0)$. 
Since each vertex from $T^*$ has a Berge-$P_2$ to $v_0$ and is adjacent to $v_0$, $T_1$ is Berge-$P_3$-connected by $T^*$ is Berge-$P_3$-connected. For each copy of $T^*$, it contains no Berge-$C_4$, and $v_0$ is a cut vertex for $T_1$. Thus $T_1$ contains no Berge-$C_4$ and  $T_1$ is a Berge-$C_4$-saturated hypergraph.

\begin{claim}\label{T1T'}
The hypergraph $T_1\cdot T'(v_0)$
is also Berge-$P_3$-connected and contains no   Berge-$C_4$.
\end{claim}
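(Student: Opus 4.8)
The claim asserts that $T_1 \cdot T'(v_0)$ is Berge-$P_3$-connected and contains no Berge-$C_4$. The plan is to reduce this to the already-established facts about $T_1$ and $T'$, exploiting that $v_0$ is a cut vertex of the amalgam. First I would verify the no-Berge-$C_4$ part: since $T_1 \cdot T'(v_0)$ is obtained from the disjoint union $T_1 \cup T'$ by identifying the vertices named $v_0$, every hyperedge lies entirely in one of the two pieces, and $v_0$ is a cut vertex separating the copies of $T^*$ inside $T_1$ from the copy of $T'$. A Berge-$C_4$ is a cyclic structure with four distinct support vertices and four distinct hyperedges; if such a cycle used hyperedges from more than one block, its support sequence would have to pass through the cut vertex $v_0$ twice, contradicting distinctness of the support vertices (a cycle through a cut vertex meets it only once). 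Hence any Berge-$C_4$ would be confined to a single block — either inside $T_1$ or inside $T'$ — and both were shown to be Berge-$C_4$-free.

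Next I would handle Berge-$P_3$-connectedness. I need a Berge-$P_3$ between any two nonadjacent vertices $x, y$ of $T_1 \cdot T'(v_0)$. If $x$ and $y$ lie in the same block, the Berge-$P_3$-connectedness of that block (already proved for $T_1$ and for $T'$) gives one directly — note one must check the path does not need to leave the block, which is automatic since a Berge-$P_3$ within a Berge-$P_3$-connected subgraph stays inside it. The interesting case is $x$ in $T_1$ and $y$ in $T'$ (both distinct from $v_0$, and automatically nonadjacent since no hyperedge crosses the cut). Here I would use the fact, recorded just before the claim, that each vertex of $T^*$ (hence of $T_1$) is adjacent to $v_0$ and has a Berge-$P_2$ to $v_0$, and likewise in $T'$. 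Concretely: take a Berge-$P_2$ from $x$ to $v_0$ inside $T_1$, say $(x, e_1, w, e_2, v_0)$, and then a hyperedge $f$ of $T'$ containing both $v_0$ and $y$ — or if no single hyperedge contains both, a Berge-$P_2$ from $v_0$ to $y$ inside $T'$ using the fact that $v_0$ in the $T'$-copy has a Berge-$P_2$ to every vertex. Splicing gives a Berge walk from $x$ to $y$; I then need to trim it to a genuine Berge-$P_3$ with distinct vertices and hyperedges. Since the $T_1$-portion uses only hyperedges and vertices in $T_1 \setminus \{v_0\}$ together with $v_0$, and the $T'$-portion uses only those in $T' \setminus \{v_0\}$ together with $v_0$, the only possible overlap is $v_0$ itself, so the concatenated sequence of two Berge-$P_2$'s sharing exactly the endpoint $v_0$ is already a Berge-$P_4$; I would instead aim for a Berge-$P_3$ by observing that in $T^*$ every vertex is in fact adjacent to $v_0$, so I can take the single hyperedge through $x$ and $v_0$ in $T_1$, then a Berge-$P_2$ from $v_0$ to $y$ in $T'$, yielding a Berge-$P_3$ $(x, g, v_0, h_1, z, h_2, y)$.

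The main obstacle I anticipate is bookkeeping the distinctness conditions when splicing paths across $v_0$: a Berge-$P_3$ requires three distinct hyperedges and four distinct vertices, so I must be careful to choose the shortest possible sub-paths on each side (length $1$ on the $T_1$ side using adjacency to $v_0$, length $2$ on the $T'$ side) and confirm the resulting lengths add up to exactly $3$ rather than $3$ or more, and that the chosen hyperedges on the two sides are automatically distinct because they live in different blocks. A secondary subtlety is the degenerate configurations flagged in the excerpt (an isolated vertex or a single hyperedge counting as Berge-$P_3$-connected), but since $T_1$ and $T'$ are both substantial hypergraphs this does not arise here. Once the cross-block case is dispatched, the claim follows, and it sets up the inductive construction $T_1 \cdot T'(v_0)$ used to realize the various residue classes of $n$ modulo $18$ in Theorem \ref{upc4}.
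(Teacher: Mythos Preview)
Your argument for the no-Berge-$C_4$ part is fine and matches the paper: $v_0$ is a cut vertex, so any Berge-$C_4$ would have to lie entirely in one block, and both blocks are Berge-$C_4$-free.

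For Berge-$P_3$-connectedness, your first idea is exactly the paper's: take a Berge-$P_2$ from $x\in T_1\setminus\{v_0\}$ to $v_0$ inside $T_1$, then a single hyperedge of $T'$ through $v_0$ and $y$. This already yields a genuine Berge-$P_3$ $(x,e_1,w,e_2,v_0,f,y)$: three distinct hyperedges ($e_1,e_2\in E(T_1)$, $f\in E(T')$, and the blocks share no hyperedges) and four distinct support vertices ($x,w\in V(T_1)\setminus\{v_0\}$, $y\in V(T')\setminus\{v_0\}$). There is nothing to trim; you never get a $P_4$ here. The only thing you left unverified is that every vertex of $T'$ is adjacent to $v_0$, and this is immediate from the edge list $\{v_0,v_1,v_2\},\{v_0,v_3,v_4\},\{v_0,v_5,v_6\},\{v_2,v_4,v_6\}$.

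The approach you actually commit to at the end---a single hyperedge through $x$ and $v_0$ in $T_1$, then a Berge-$P_2$ from $v_0$ to $y$ in $T'$---does \emph{not} work. In $T'$ the vertices $v_1,v_3,v_5$ each lie in exactly one hyperedge, and that hyperedge already contains $v_0$; there is no Berge-$P_2$ from $v_0$ to $v_1$ (any such path would need a second hyperedge through $v_0$ and $v_2$, and none exists). So the asymmetry between the two sides matters: it is $T_1$ that supplies the $P_2$ (the paper records, just before the claim, that every vertex of $T^*$ has a Berge-$P_2$ to $v_0$), and $T'$ that supplies the single hyperedge. Swap the roles back and your proof goes through.
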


\begin{proof}
Since	both $T_1$ and $T'$ contain no Berge-$C_4$, the identifying of $T_1$ and  $T'$   also  contains no Berge-$C_4$ by $v_0$ is a cut vertex. 
It remains to verify that any two nonadjacent vertices have a Berge-$P_3$ between them. 
Note that  $T_1$ and $T'$ are Berge-$P_3$-connected.  We only need to consider the pairs of vertices, which one is from $T_1\setminus\{v_0\}$ and the other is from $T'\setminus\{v_0\}$. 
Recall each vertex in $T_1$ has a Berge-$P_2$ to $v_0$ and each vertex in $T'$ is adjacent to $v_0$. Hence,  any pair of vertices, which one is from  $T_1\setminus\{v_0\}$ and the other  is from $T'\setminus\{v_0\}$, has a Berge-$P_3$ connecting them. 
\end{proof}


\begin{claim}\label{twocompo}
The disjoint union of two Berge-$P_3$-connected hypergraphs containing no Berge-$C_4$ is Berge-$C_4$-saturated.  
\end{claim}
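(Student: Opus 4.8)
The plan is to verify directly the two defining properties of a Berge-$C_4$-saturated hypergraph. Write $H=H_1\cup H_2$ for the disjoint union, where each $H_i$ is Berge-$P_3$-connected and contains no Berge-$C_4$. First I would observe that $H$ itself contains no Berge-$C_4$: in a Berge-$C_4$ the four hyperedges are hyperedges of $H$, and consecutive ones share a support vertex, so all four hyperedges together with all four support vertices lie inside a single component $H_i$; since $H_i$ is Berge-$C_4$-free, $H$ is too.

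The substance lies in showing that adding any admissible hyperedge creates a Berge-$C_4$. Let $e=\{x,y,z\}\in\binom{V(H)}{3}\setminus E(H)$ be such that $H+e$ is still linear and $3$-uniform. Since $e$ has three vertices and $H$ has only two components, by the pigeonhole principle two vertices of $e$, say $x$ and $y$, lie in the same component $H_i$. The key point is that $x$ and $y$ must be \emph{nonadjacent} in $H_i$: if some hyperedge $f\in E(H_i)$ contained both $x$ and $y$, then $f$ and $e$ would share at least two vertices, contradicting the linearity of $H+e$.

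Now I would invoke Berge-$P_3$-connectedness of $H_i$ at the nonadjacent pair $x,y$: there is a Berge-$P_3$ of the form $(x,f_1,w_1,f_2,w_2,f_3,y)$ in $H_i$. Then $(x,f_1,w_1,f_2,w_2,f_3,y,e)$ is a Berge-$C_4$ in $H+e$. Indeed, the incidences $x,w_1\in f_1$, $w_1,w_2\in f_2$, $w_2,y\in f_3$ come from the Berge-$P_3$, while $y,x\in e$ by the choice of $e$; the four support vertices $x,w_1,w_2,y$ are pairwise distinct because they occur in the Berge-$P_3$; and the four hyperedges $f_1,f_2,f_3,e$ are pairwise distinct since $f_1,f_2,f_3$ are distinct hyperedges of the Berge-$P_3$ and $e\notin E(H)\supseteq\{f_1,f_2,f_3\}$. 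Hence $H+e$ contains a Berge-$C_4$, which together with the first paragraph shows $H$ is Berge-$C_4$-saturated.

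I do not expect a serious obstacle here; the only points needing care are the distinctness conditions in the definition of a Berge-$C_4$ (that $e$ is genuinely new and that the support vertices do not collide), all of which are immediate from $e\notin E(H)$ and from the distinctness built into the definition of a Berge-$P_3$. One should also note the degenerate case in which a component of $H$ is a single vertex or a single hyperedge: in that case linearity forces the two coinciding vertices of $e$ into the other component (or no admissible $e$ exists at all), so the pigeonhole step still produces a nonadjacent pair inside a Berge-$P_3$-connected component and the argument goes through unchanged.
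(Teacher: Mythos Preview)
Your proof is correct and follows exactly the same route as the paper: pigeonhole gives two vertices of the new hyperedge in one component, linearity forces them to be nonadjacent, and Berge-$P_3$-connectedness supplies the path that closes up into a Berge-$C_4$. You simply spell out in detail (distinctness of support vertices and hyperedges, and the degenerate components) what the paper leaves implicit.
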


\noindent{\textit{Proof.}} We can see  the disjoint union contains no Berge-$C_4$. Adding any new hyperedge, it must contain at least two vertices, saying $v_1, v_2$, from the same Berge-$P_3$-connected hypergraph. By the linearity of the hypergraph, $v_1$ and $v_2$ are not adjacent. Since the hypergraph is Berge-$P_3$-connected, it will create a Berge-$C_4$. Thus Claim \ref{twocompo} holds.  
$\hfill\qedsymbol$

\medskip

For $i\in \{2,3,4,\dots,18\}$, 	we construct a hypergraph $T'_i$ consists of $v_0$ and $i-1$ new vertices such that there is at most one hyperedge in $T'_i$ containing vertices which are not adjacent to $v_0$. Let    $T_i=\big\lfloor\frac{n}{18}\big\rfloor T^*\cdot T_i'(v_0)=T_1\cdot T'_i(v_0)$ for $i\in \{2,3,4,\dots,18\}$. The specific constructions of $T_i$ are as follows in Figures \ref{T2481014},  \ref{T35}, \ref{T67911}, \ref{T12356}, and \ref{T178}.
When $n\equiv 2,4,8,10,14\pmod{18}$, we have $T_i$ is Berge-$C_4$-saturated for $i\in\{2,4,8,10,14\}$ by Claims \ref{T1T'}, \ref{twocompo}.

\begin{claim}\label{inducedgraph}
If $T'_i$ is Berge-$C_4$-saturated for $i\in \{3,5,6,7,9,11,12,13,15,16,17,18\}$, then $T_i$ is Berge-$C_4$-saturated.
\end{claim}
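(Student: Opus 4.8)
The plan is to verify directly the two conditions in the definition of Berge-$C_4$-saturation for $T_i=T_1\cdot T'_i(v_0)$. I will use throughout that (by Definition~\ref{identify}) $E(T_i)=E(T_1)\cup E(T'_i)$ with each hyperedge contained in a single copy of $T^*$ or in $T'_i$, so that $v_0$ is a cut vertex and $V(T_1)\cap V(T'_i)=\{v_0\}$; that $T_1$ is Berge-$P_3$-connected and Berge-$C_4$-free, and every vertex of $T_1$ distinct from $v_0$ is joined to $v_0$ by a Berge-$P_2$ inside $T_1$ (all established above); and the defining feature of $T'_i$ that all of its vertices not adjacent to $v_0$ lie in one common hyperedge of $T'_i$.

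First I would show $T_i$ has no Berge-$C_4$. Each of the four hyperedges of a putative Berge-$C_4$ lies on the ``$T_1$-side'' or the ``$T'_i$-side'' of $v_0$, and whenever two cyclically consecutive hyperedges lie on opposite sides their shared support vertex can only be $v_0$. Around a $4$-cycle the number of such side-changes is even, so a Berge-$C_4$ not lying entirely on one side would have at least two changes and hence two of its four distinct support vertices equal to $v_0$, which is impossible; thus it lies inside $T_1$ or inside $T'_i$, contradicting that both are Berge-$C_4$-free (the latter because $T'_i$ is Berge-$C_4$-saturated).

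Next, for the second condition, I would take a $3$-set $e$ with $T_i+e$ linear and split into cases according to how $e$ meets $A:=V(T_1)\setminus\{v_0\}$ and $B:=V(T'_i)\setminus\{v_0\}$. If $|e\cap V(T_1)|\ge 2$, two such vertices $u,w$ are non-adjacent in $T_1$ by linearity, a Berge-$P_3$ joining them in $T_1$ exists by Berge-$P_3$-connectedness, and $e$ closes it into a Berge-$C_4$. If $e\subseteq V(T'_i)$ (which also covers the case that the one vertex of $e$ outside $B$ is $v_0$), then $e$ is a legal new hyperedge for $T'_i$, so Berge-$C_4$-saturation of $T'_i$ already produces a Berge-$C_4$ inside $T'_i+e\subseteq T_i+e$. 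The only remaining case is $e=\{u,w,a\}$ with $u,w\in B$ and $a\in A$; here linearity forces $u,w$ non-adjacent in $T'_i$, so by the defining feature of $T'_i$ at least one of them, say $u$, is adjacent to $v_0$ in $T'_i$, via a hyperedge $F_1$. Choosing a Berge-$P_2$ $(v_0,F_2,q,F_3,a)$ from $v_0$ to $a$ inside $T_1$, I would check that $(u,e,a,F_3,q,F_2,v_0,F_1,u)$ is a Berge-$C_4$ of $T_i+e$: the support vertices $u,a,q,v_0$ are pairwise distinct (the Berge-$P_2$ vertices $v_0,q,a$ are distinct and $u\in B$ is disjoint from $V(T_1)$) and the hyperedges $e,F_3,F_2,F_1$ are pairwise distinct (since $e\notin E(T_i)$, $F_1\in E(T'_i)$, $F_2,F_3\in E(T_1)$, $F_2\ne F_3$).

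I expect that last case to be the only genuine obstacle. A naive attempt there would look for a Berge-$P_3$ between $u$ and $w$ inside $T'_i$, which Berge-$C_4$-saturation of $T'_i$ does not directly supply. The way around it is to exploit that $e$ being a legal extension forces $u$ and $w$ non-adjacent in $T'_i$, whereupon the construction constraint on $T'_i$ forbids both from missing $v_0$; once one endpoint reaches $v_0$ in a single step, the Berge-$P_2$ from $v_0$ into $T_1$ lets the cycle be completed on the $T_1$-side. The remaining loose ends I would pin down are that $T'_i$ has no isolated vertex --- so that ``at most one hyperedge contains a non-neighbour of $v_0$'' really places all non-neighbours of $v_0$ in a single hyperedge --- and that each hyperedge of $T_i$ lies on exactly one side of $v_0$, which the cut-vertex arguments above rely on.
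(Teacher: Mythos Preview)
Your proof is correct and follows essentially the same line as the paper: both use that $v_0$ is a cut vertex to rule out a Berge-$C_4$, and then handle any new linear hyperedge either by the saturation of one side or by producing a Berge-$P_3$ routed through $v_0$ and closing it with $e$. The paper phrases the cross-side step slightly differently---it asserts from the construction that every vertex of $T'_i$ is adjacent to $v_0$ or has a Berge-$P_2$ to $v_0$, so that \emph{every} cross-pair already has a Berge-$P_3$---but this is the same structural fact about $T'_i$ that you extract from the ``one special hyperedge'' feature in your Case~3 (and the loose end you flag about isolated vertices is exactly what the paper's assertion also needs).
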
	 

\begin{proof}
Since both $T_1$ and $T'_i$ contain no Berge-$C_4$, the identifying of $T_1$ and  $T'_i$  also  contains no Berge-$C_4$ by $v_0$ is a cut vertex. 
Recall each vertex in $T_1$ is adjacent to $v_0$ and each vertex in $T_1$ has a Berge-$P_2$ to $v_0$.  We can see that each vertex in $T'_i$ is adjacent to $v_0$ or has a Berge-$P_2$ to $v_0$. Hence, any pair of vertices, which one is from  $T_1\setminus\{v_0\}$ and the other is from $T'_i\setminus\{v_0\}$, has a Berge-$P_3$.  Additionally, $T_1$ and $T'_i$ are Berge-$C_4$-saturated, so any hyperegde $e\in
\binom{V(T_i)}{3}\setminus E(T_i)$ such that $E(T_i)\cup \{e\}$ is linear will create a Berge-$C_4$. By the definition,  $T_i$ is Berge-$C_4$-saturated.
\end{proof}

\begin{figure}[H]
\centering{
	\begin{minipage}[t]{0.48\textwidth}
		\centering
		\includegraphics[scale=0.70]{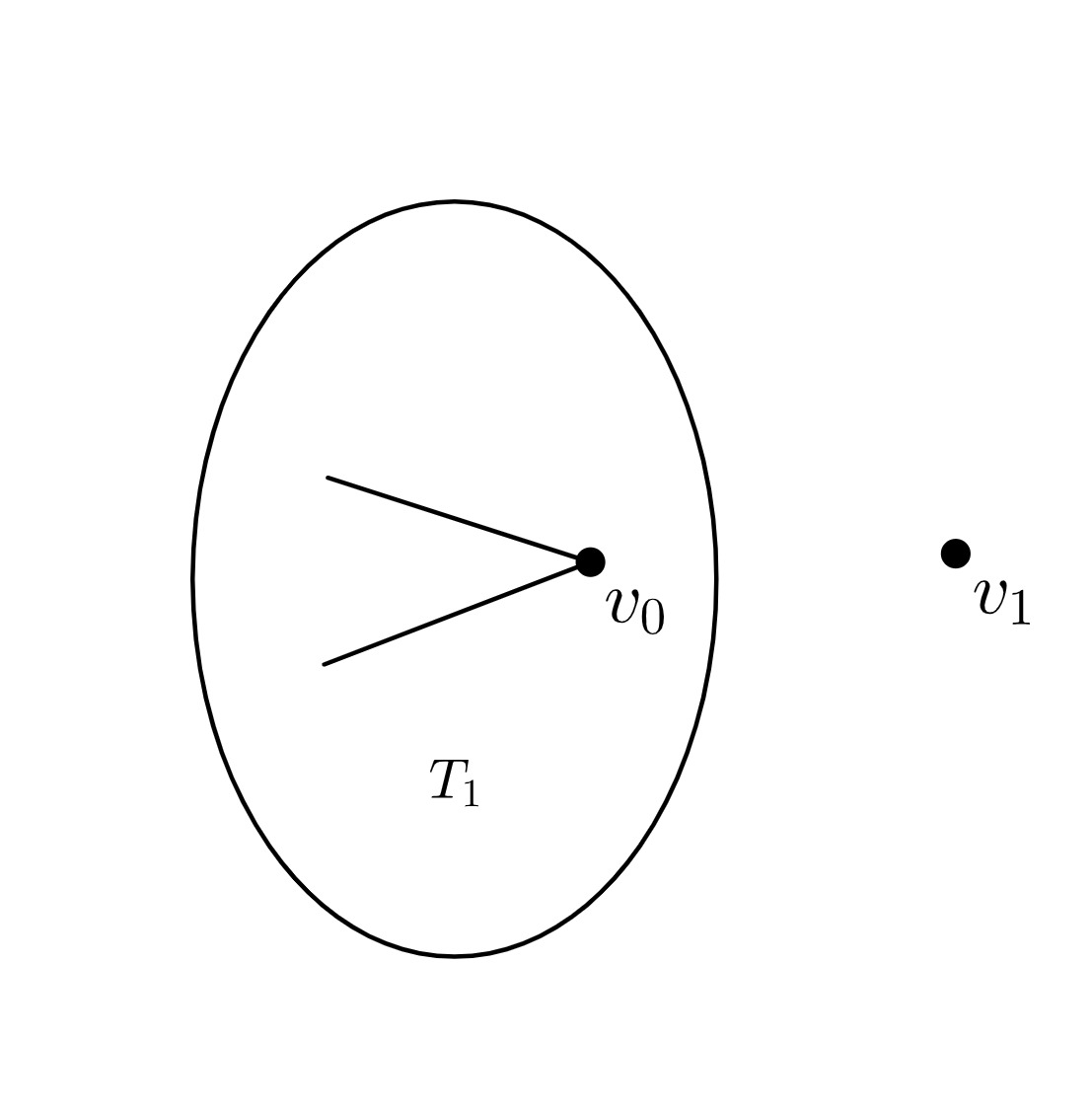}
		\caption*{$T_{2}$}
	\end{minipage}
	\begin{minipage}[t]{0.48\textwidth}
		\centering
		\includegraphics[scale=0.70]{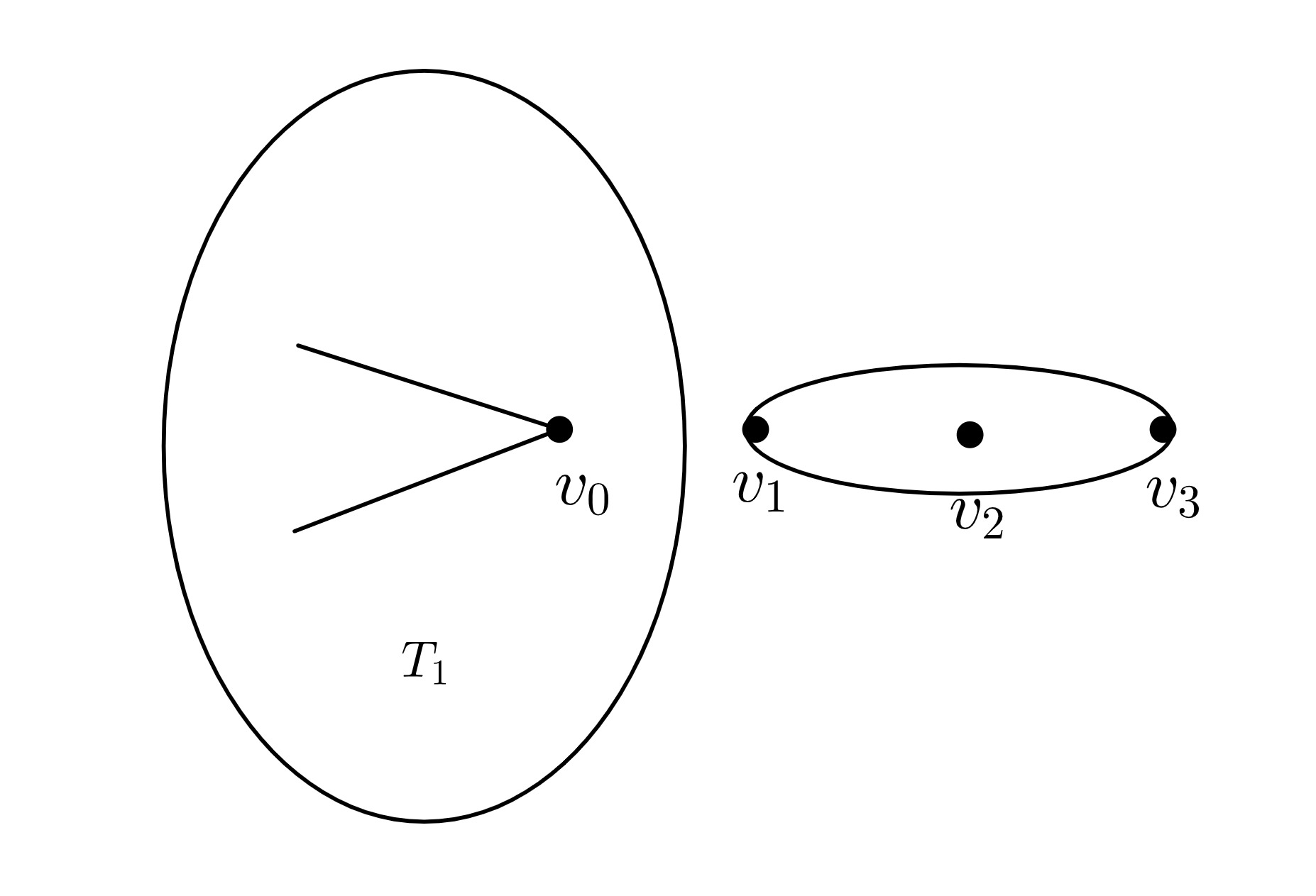}
		\caption*{$T_{4}$}
	\end{minipage}
	\begin{minipage}[t]{0.48\textwidth}
		\centering
		\includegraphics[scale=0.70]{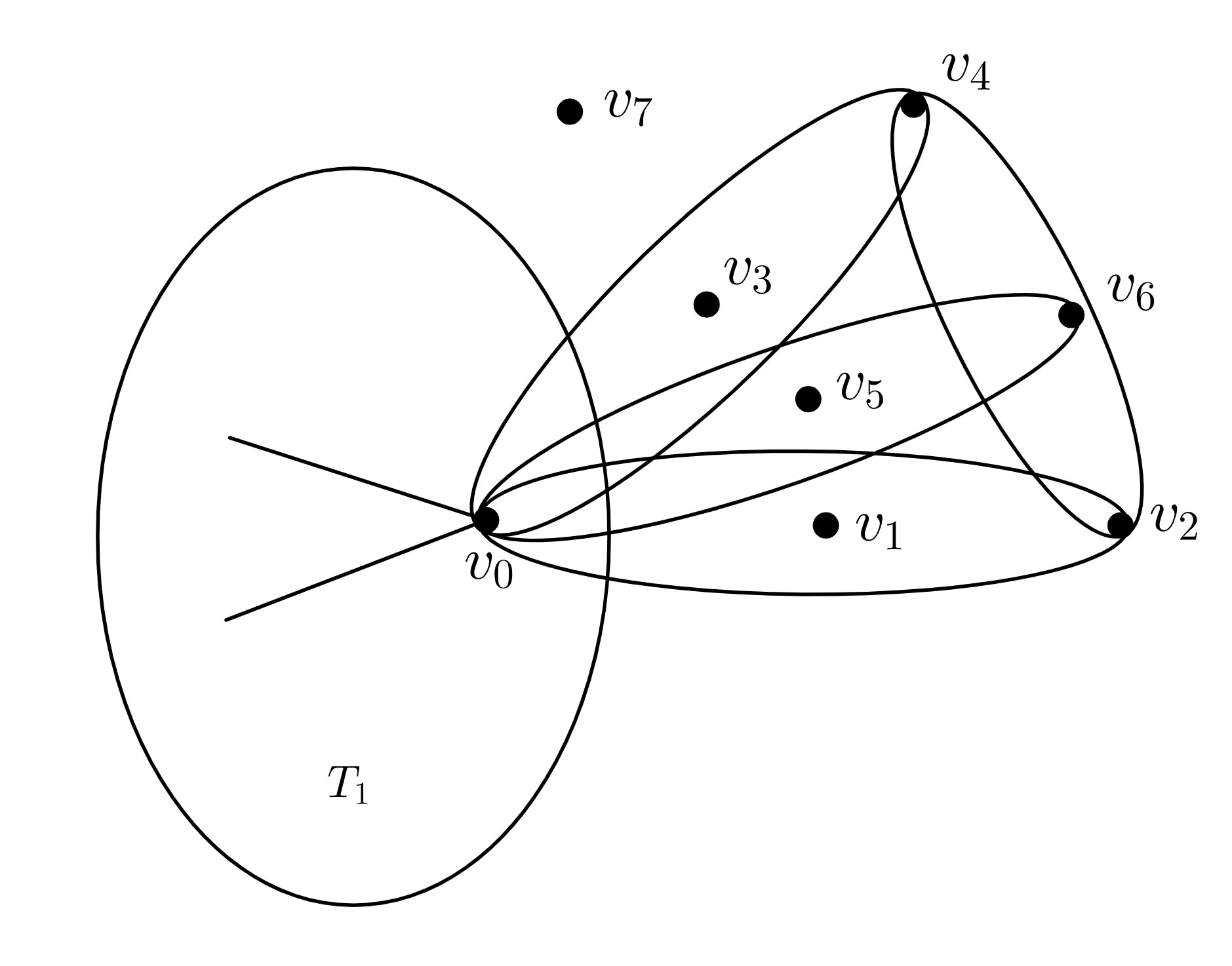}
		\caption*{$T_{8}$}
	\end{minipage}
	\begin{minipage}[t]{0.48\textwidth}
		\centering
		\includegraphics[scale=0.70]{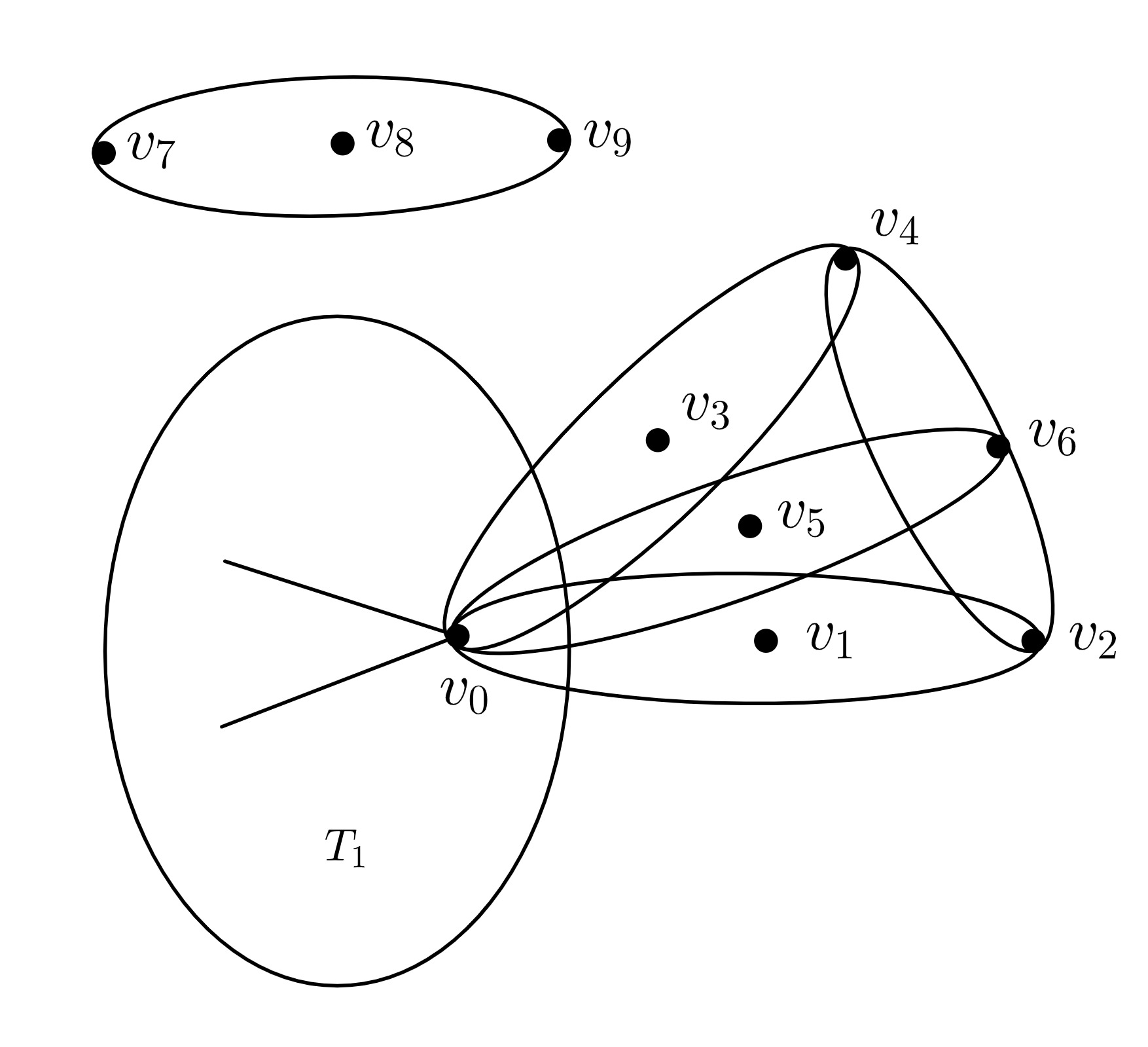}
		\caption*{$T_{10}$}
	\end{minipage}
}
\end{figure}
\begin{figure}[H]
\centering{
	\begin{minipage}[t]{0.48\textwidth}
		\centering
		\includegraphics[scale=0.70]{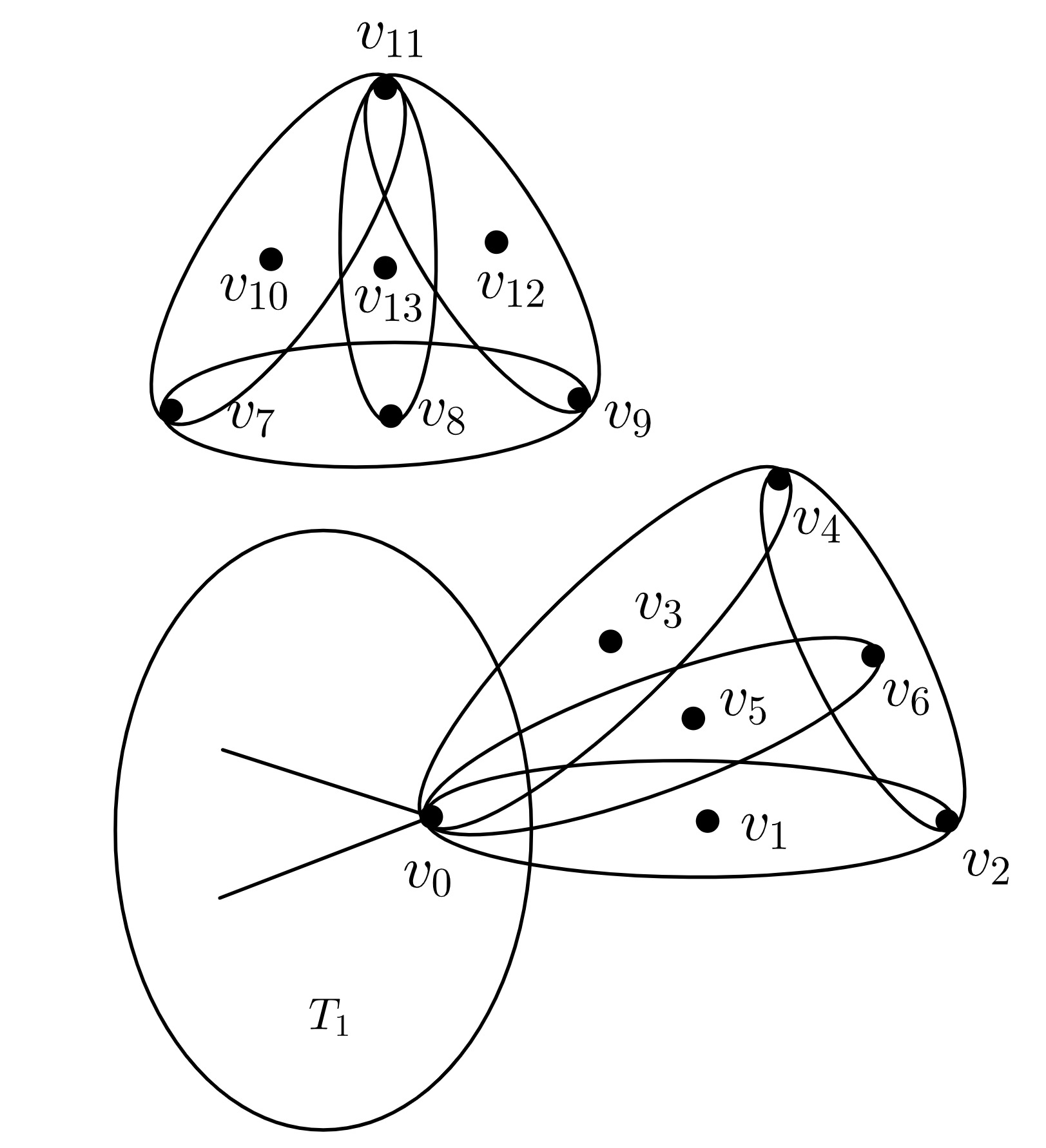}
		\caption*{$T_{14}$}
\end{minipage}	}	
\caption{The hypergraphs $T_2$, $T_4$, $T_8$, $T_{10}$, and $T_{14}$}\label{T2481014}
\end{figure}

\begin{figure}[H]
\centering{
	\begin{minipage}[t]{0.48\textwidth}
		\centering
		\includegraphics[scale=0.70]{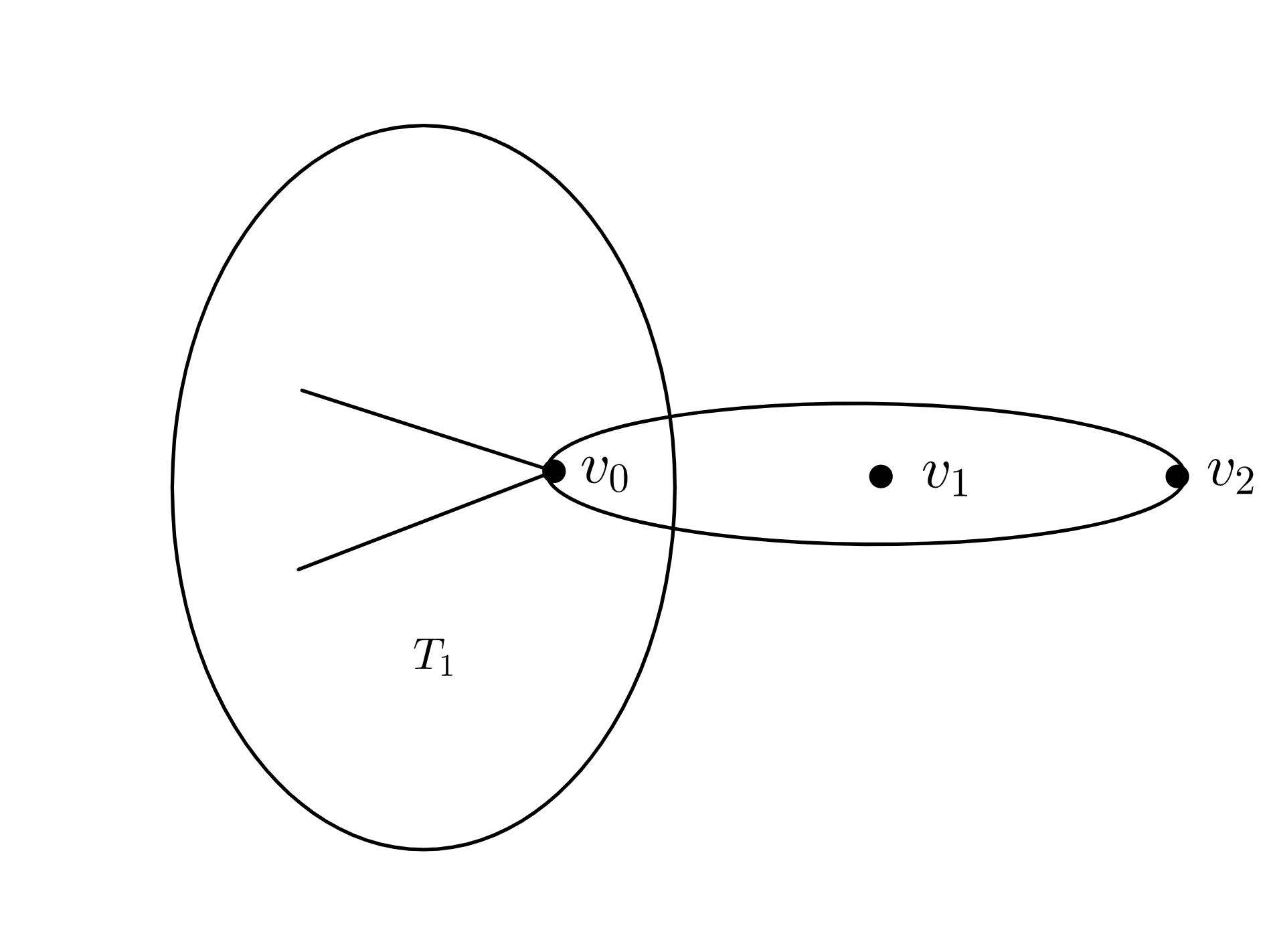}
		\caption*{$T_{3}$}
	\end{minipage}
	\begin{minipage}[t]{0.48\textwidth}
		\centering
		\includegraphics[scale=0.70]{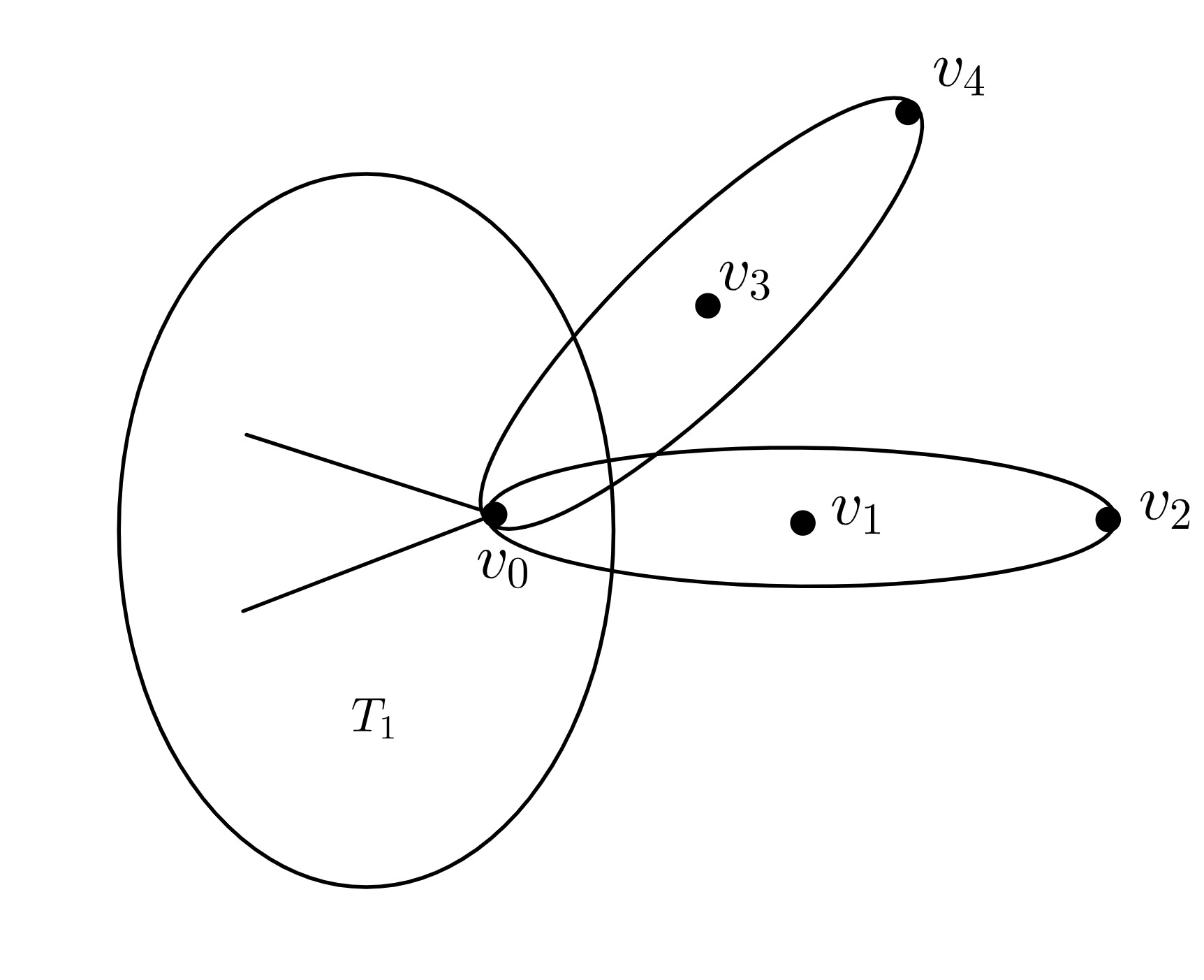}
		\caption*{$T_{5}$}
	\end{minipage}		
}
\caption{The hypergraphs $T_3$ and $T_5$}\label{T35}
\end{figure}

When $n\equiv 3,5\pmod{18}$, 
for $i\in\{3,5\}$, $T'_i$  has at most $5$ vertices and the number of hyperedges is maximal subject to the linearity of $T'_i$. 
Since a Berge-$C_4$ contains at least $6$ vertices, $T_i'$ contains no Berge-$C_4$. Hence, $T'_i$ is Berge-$C_4$-saturated, and so $T_i$ is Berge-$C_4$-saturated for $i\in\{3,5\}$ by Claim \ref{inducedgraph}.  

\begin{figure}[H]
\centering{
	\begin{minipage}[t]{0.48\textwidth}
		\centering
		\includegraphics[scale=0.70]{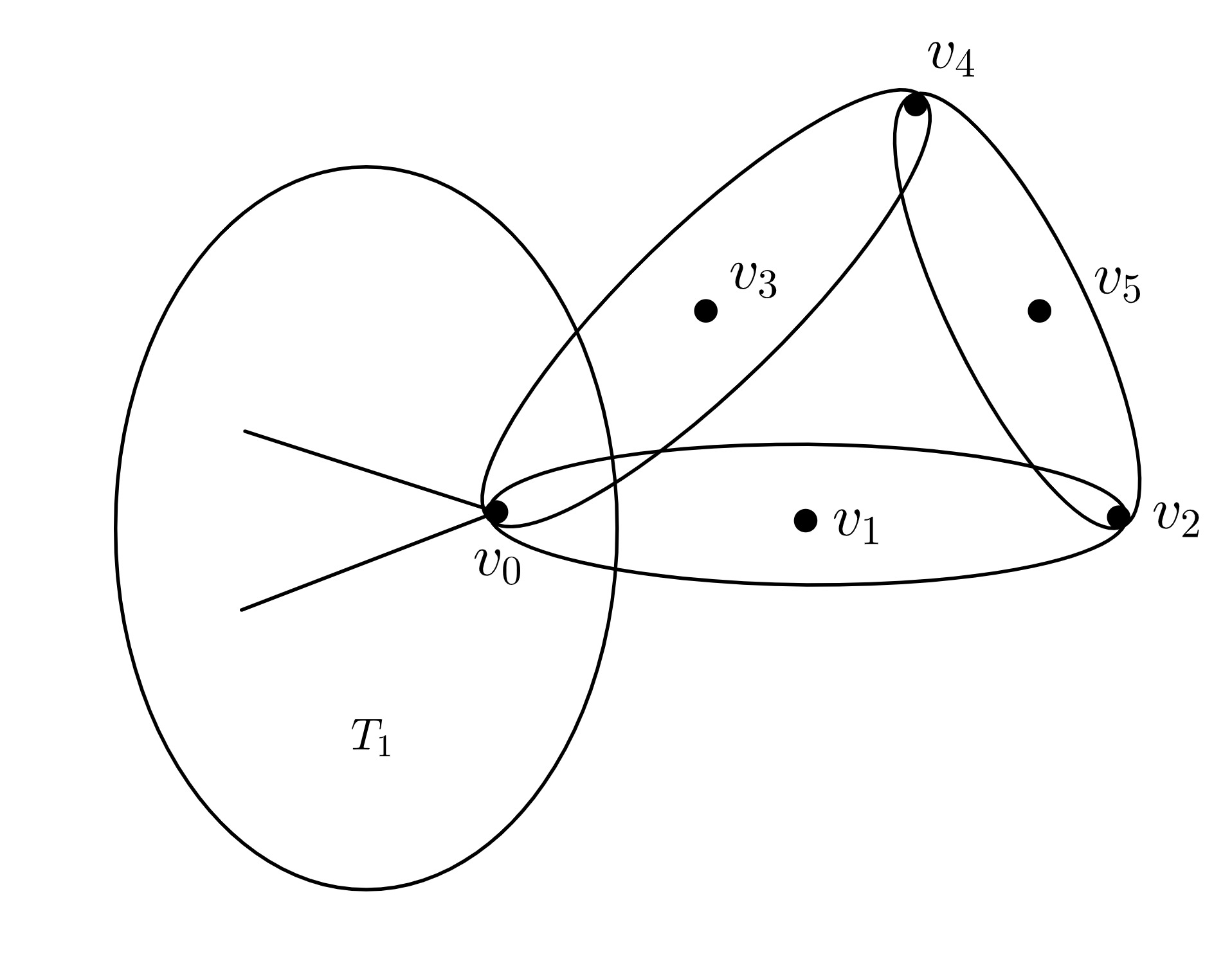}
		\caption*{$T_{6}$}
	\end{minipage}
	\begin{minipage}[t]{0.48\textwidth}
		\centering
		\includegraphics[scale=0.70]{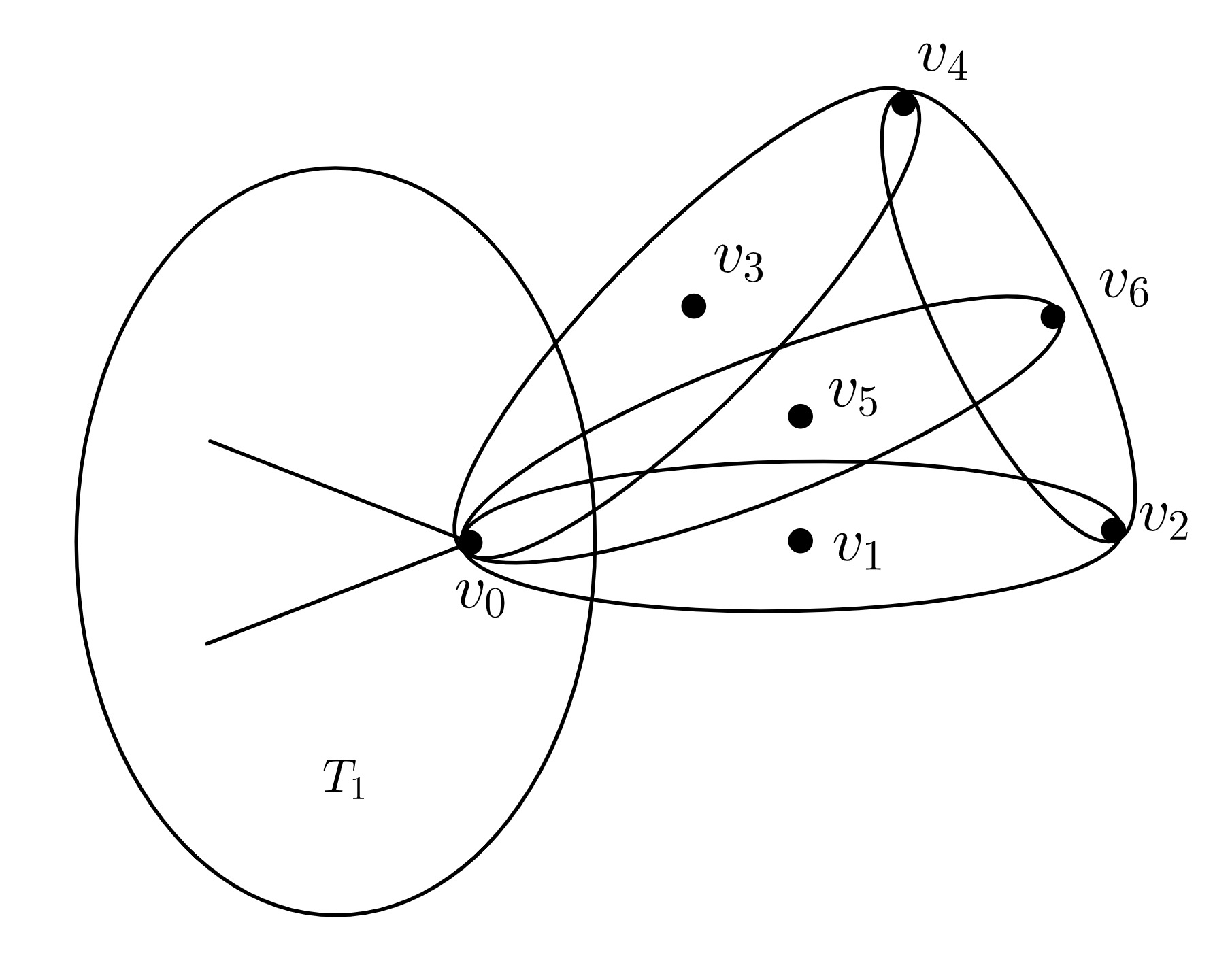}
		\caption*{$T_{7}$}
	\end{minipage}
	\begin{minipage}[t]{0.48\textwidth}
		\centering
		\includegraphics[scale=0.70]{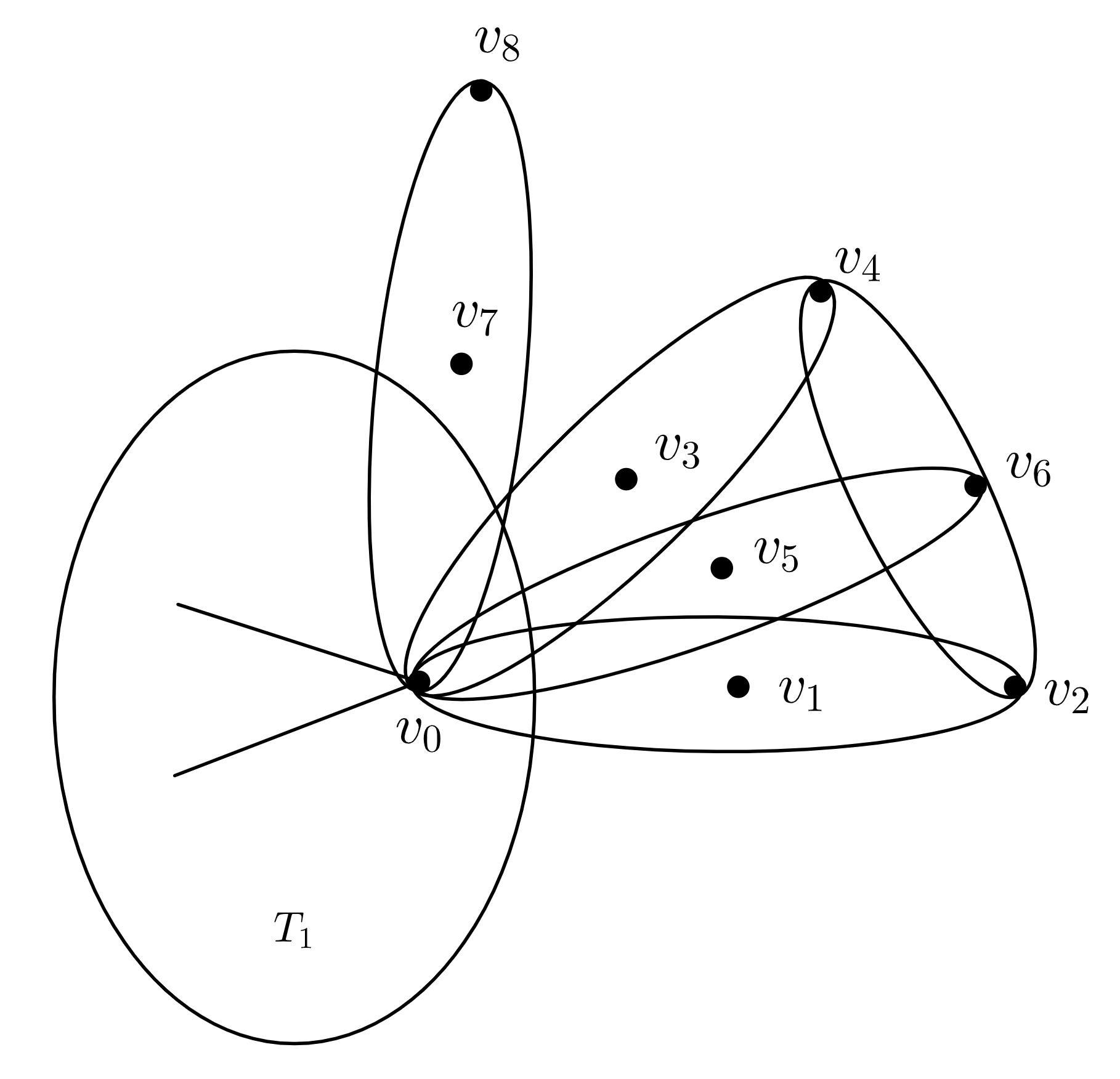}
		\caption*{$T_{9}$}
	\end{minipage}
	\begin{minipage}[t]{0.48\textwidth}
		\centering
		\includegraphics[scale=0.70]{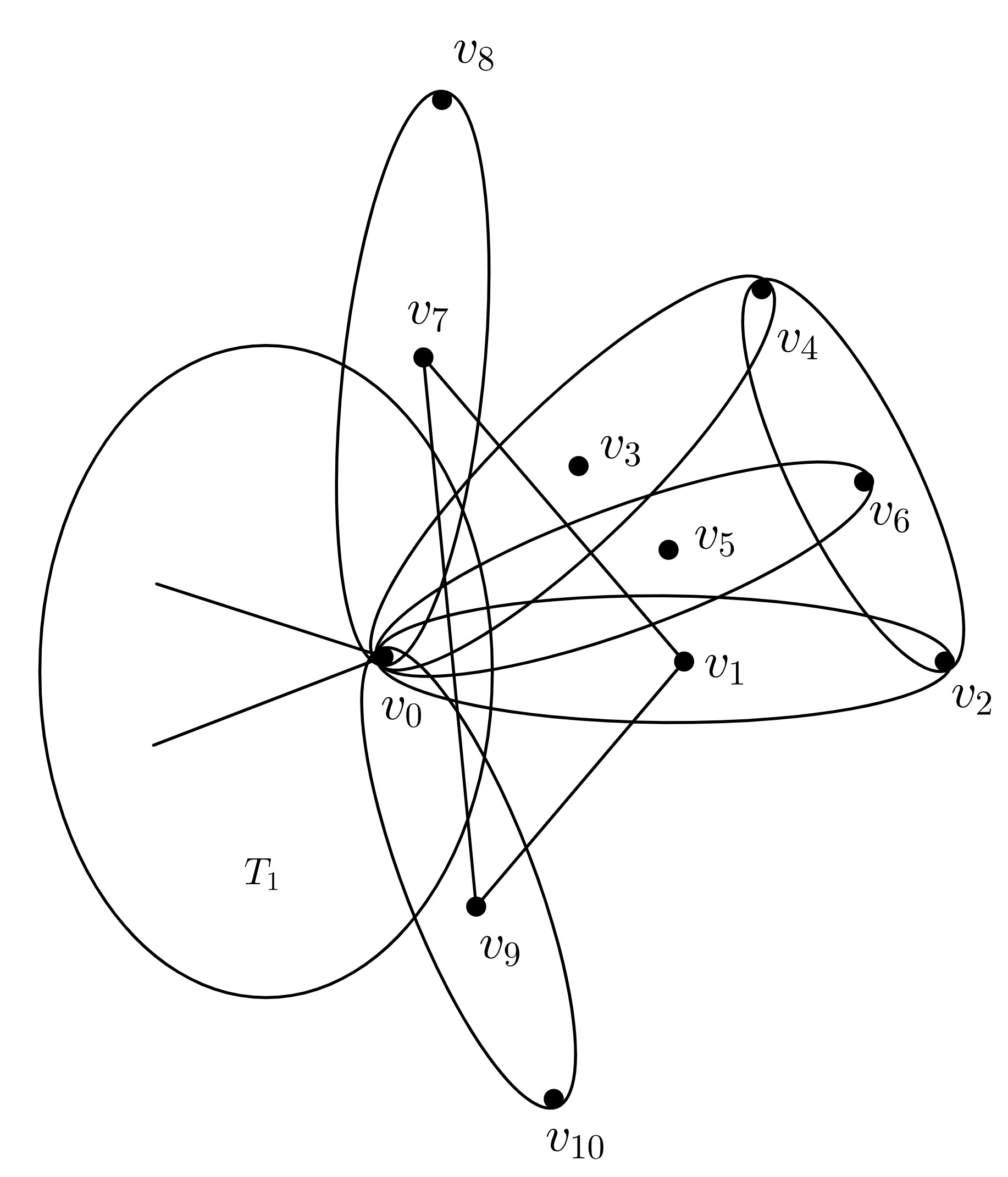}
		\caption*{$T_{11}$}
	\end{minipage}		
}
\caption{The hypergraphs $T_6$, $T_7$, $T_9$, and $T_{11}$}\label{T67911}
\end{figure}	 
When $n\equiv 6\pmod{18}$
, $T'_6$ is isomorphic to a  Berge-$C_3$. The only hyperedge that we can add is $\{v_1,v_3,v_5$\}, which will create a Berge-$C_4$. Hence, $T'_6$ is Berge-$C_4$-saturated, and so $T_6$ is Berge-$C_4$-saturated by Claim \ref{inducedgraph}.

When $n\equiv 7\pmod{18}$
, $T'_7$ is isomorphic to $T'$,  which is Berge-$C_4$-saturated. Therefore, $T_7$ is Berge-$C_4$-saturated by Claim \ref{inducedgraph}. 

When $n\equiv 9\pmod{18}$
, $T'_9$ is a subgraph of $T_1$, then it contains no copy of Berge-$C_4$. If we add a hyperedge in $T'_9$, then it contains at least two vertices of $T'$ who is Berge-$P_3$-connected, which will create a Berge-$C_4$. It indicates $T'_9$ is Berge-$C_4$-saturated, moreover $T_9$ is Berge-$C_4$-saturated by Claim \ref{inducedgraph}.

When $n\equiv 11\pmod{18}$
, $T'_{11}$ is a subgraph of $T_1$, then it contains no copy of Berge-$C_4$. There are a Berge-$P_3$ $(v_8,\{v_8,v_7,v_0\},v_0,\{v_0,v_2,v_1\},v_1,\{v_1,v_7,v_9\},v_9)$ between $v_8$ and $v_9$,  a Berge-$P_3$ $(v_8,\{v_8,v_0,v_7\},v_7,\{v_7,v_1,v_9\},v_9,\{v_9,v_0,v_{10}\},v_{10})$ between $v_8$ and $v_{10}$, and a Berge-$P_3$ $(v_7,\{v_1,v_7,v_9\},v_1,\{v_0,v_2,v_1\},v_0,\{v_0,v_{10},v_9\},v_{10})$ between $v_7$ and $v_{10}$. 
If we add a hyperedge in $T'_{11}$, then it contains at least two nonadjacent vertices of $T'$  or two vertices from $\{v_0,v_7,v_8\}$ and $\{v_0,v_9,v_{10}\}$, respectively. If the former holds, then there is a Berge-$C_4$  by $T'$ is Berge-$P_3$-connected. 
Since there are Berge-$P_3$s between $v_8$ and $v_9$, $v_8$ and $v_{10}$, $v_7$ and $v_{10}$, if the added hyperedge is the latter, we have a Berge-$C_4$.
Thus $T'_{11}$ is Berge-$C_4$-saturated, and $T_{11}$ is Berge-$C_4$-saturated  by Claim \ref{inducedgraph}.

\begin{figure}[H]
\centering{
	\begin{minipage}[t]{0.48\textwidth}
		\centering
		\includegraphics[scale=0.70]{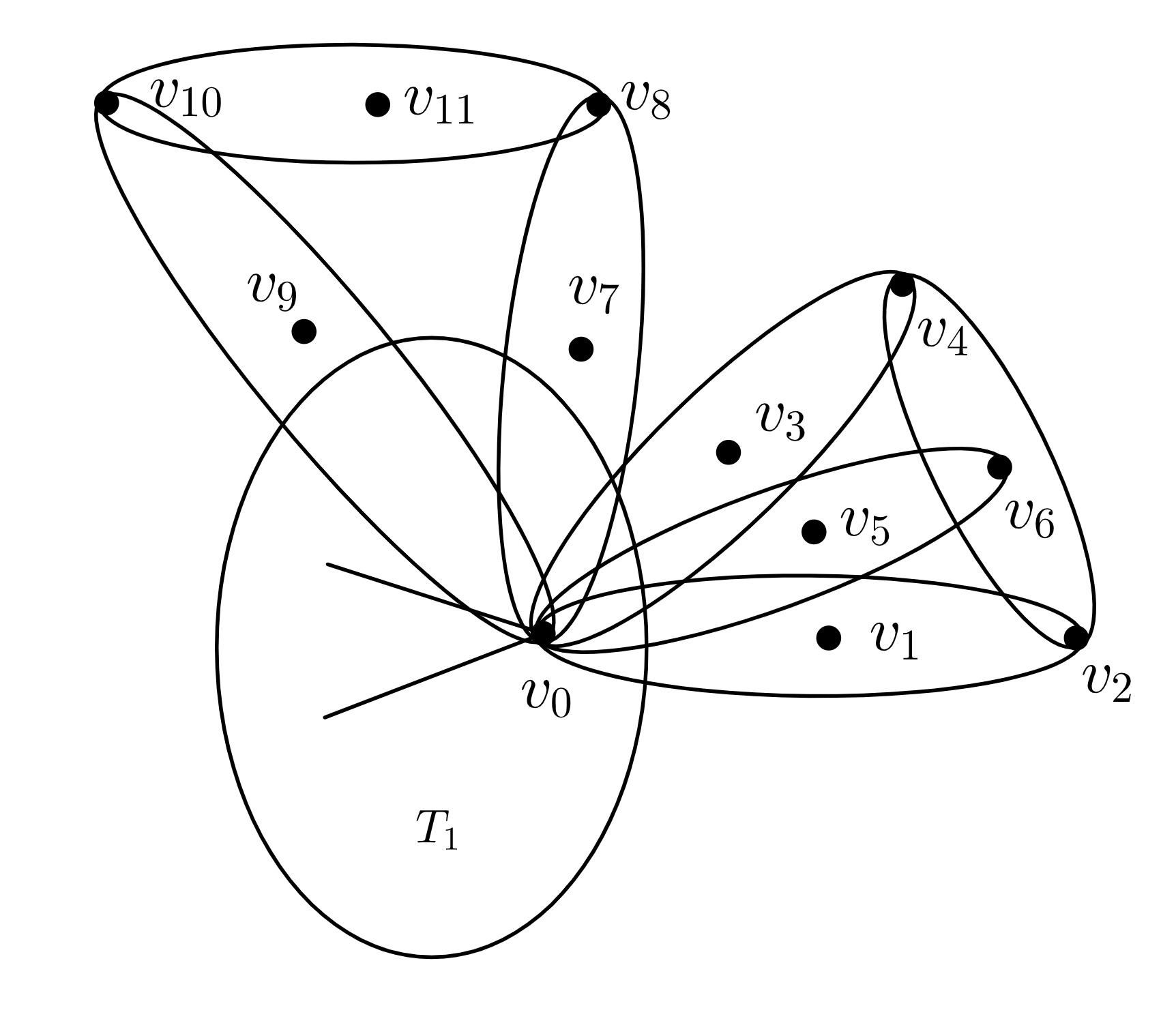}
		\caption*{$T_{12}$}
	\end{minipage}
	\begin{minipage}[t]{0.48\textwidth}
		\centering
		\includegraphics[scale=0.70]{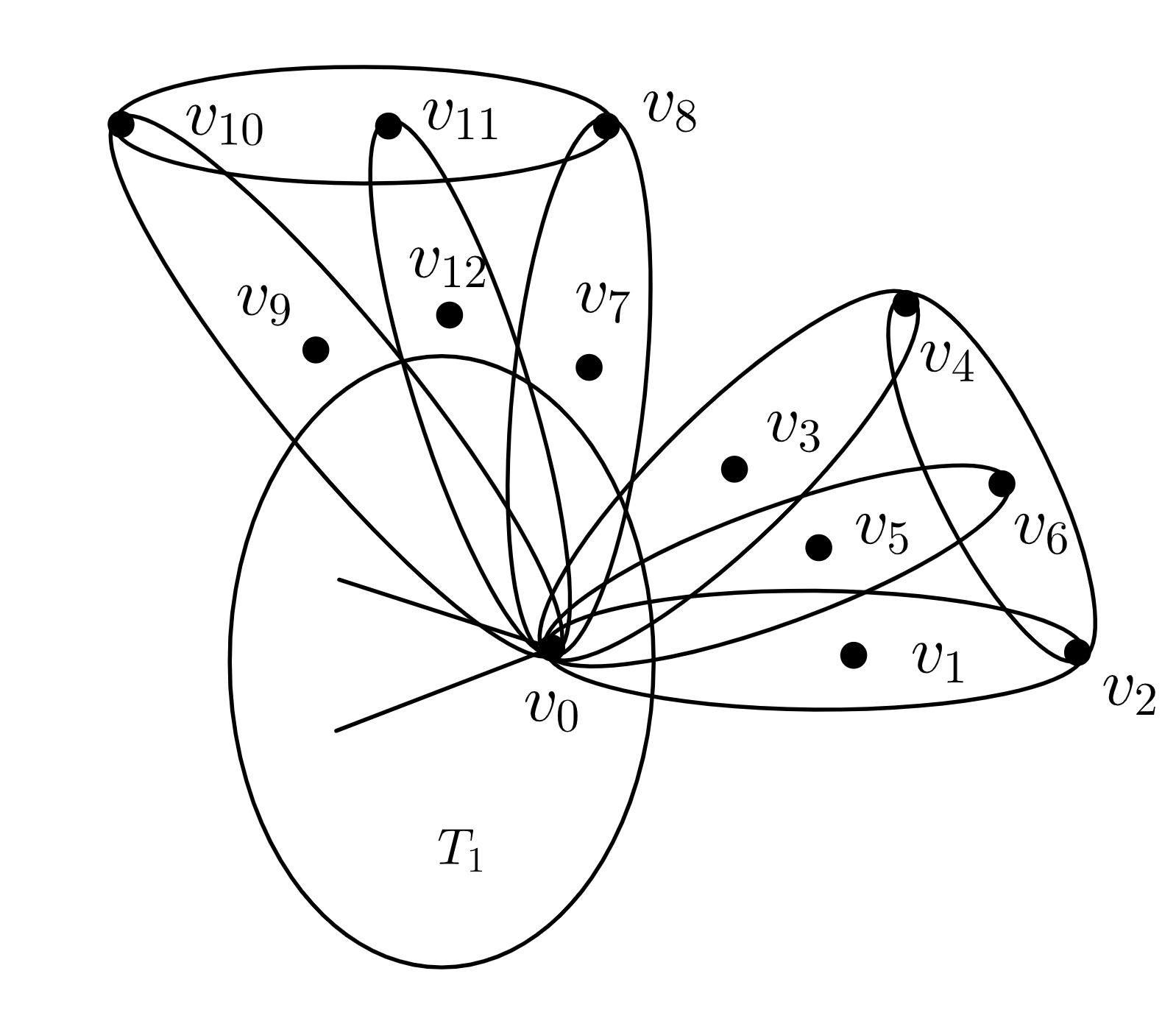}
		\caption*{$T_{13}$}
	\end{minipage}
	\begin{minipage}[t]{0.48\textwidth}
		\centering
		\includegraphics[scale=0.70]{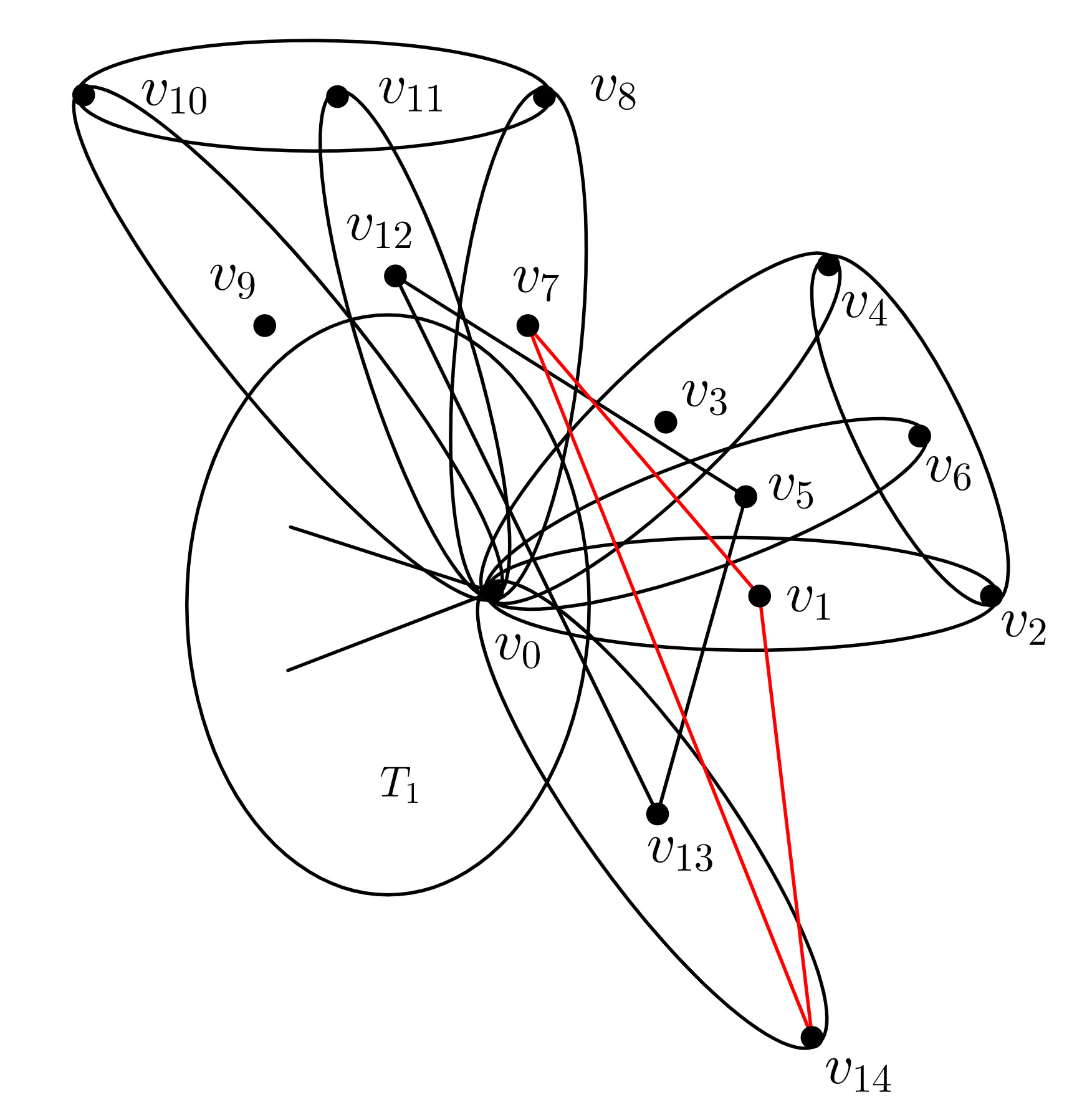}
		\caption*{$T_{15}$}
	\end{minipage}
	\begin{minipage}[t]{0.48\textwidth}
		\centering
		\includegraphics[scale=0.70]{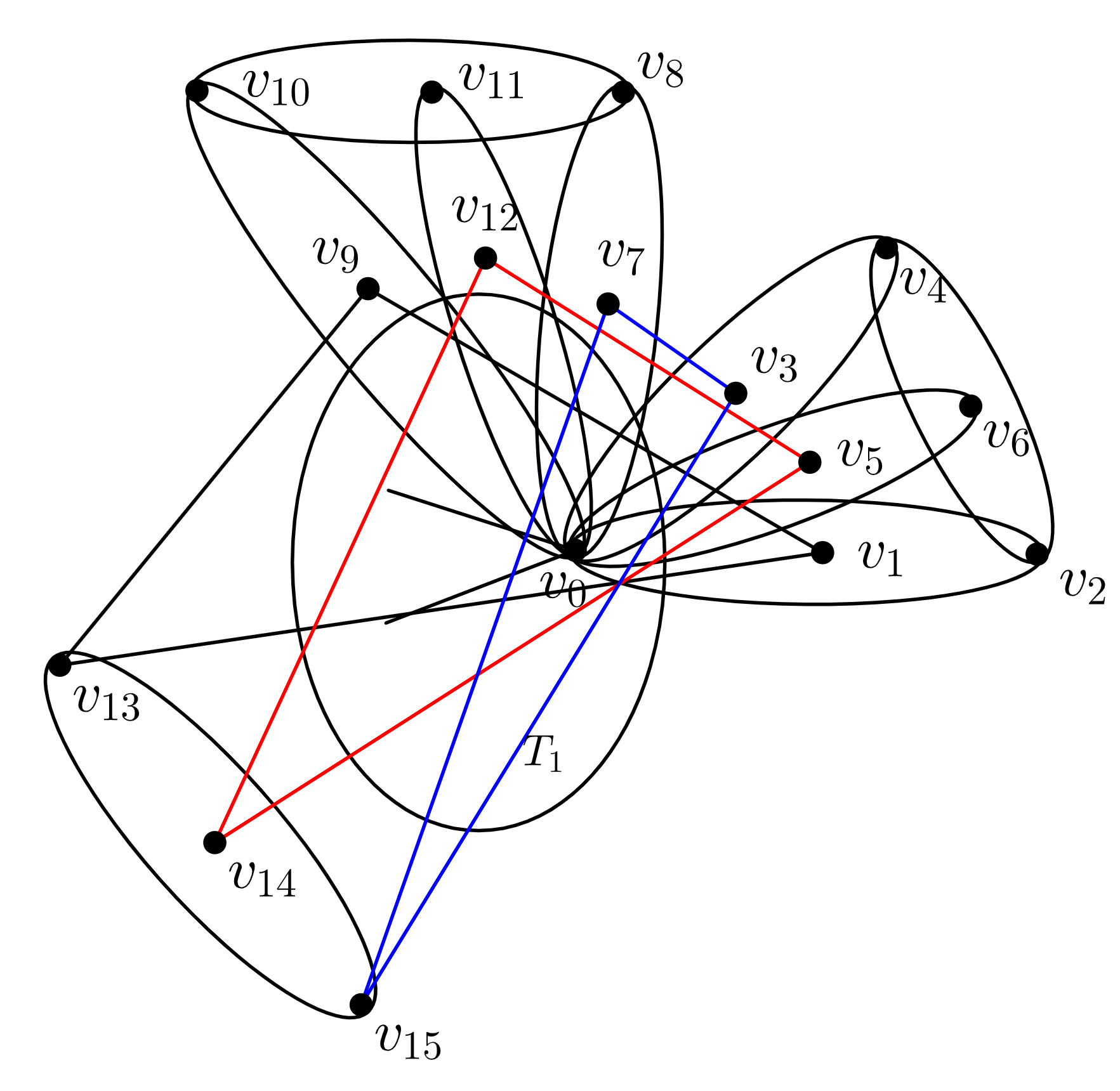}
		\caption*{$T_{16}$}
	\end{minipage}		
}
\caption{The hypergraphs $T_{12}$, $T_{13}$, $T_{15}$ and  $T_{16}$}\label{T12356}
\end{figure}	 

When $n\equiv 12\pmod{18}$, 
$T'_{12}$ is a subgraph of $T_1$, then it contains no Berge-$C_4$. 
If we add a hyperedge $e$ in $T'_{12}$ such that there is at least two vertices in $e$ from the vertex set $\{v_1,v_2,v_3,v_4,v_5,v_6\}$, we will have a Berge-$C_4$ by the hypergraph induced by the vertex set $\{v_0,v_1,v_2,v_3,v_4,v_5,v_6\}$ is isomorphic to $T'$, which is Berge-$P_3$-connected. If $e$ contains no vertex from vertex set $\{v_1,v_2,v_3,v_4,v_5,v_6\}$, note that the hypergraph induced by the vertex set $\{v_0,v_7,v_8,v_9,v_{10},v_{11}\}$ is isomorphic to a Berge-$C_3$, which is Berge-$C_4$-saturated, then it will create a Berge-$C_4$. It remains to consider the case that $e$ contains exactly one vertex from vertex set $\{v_1,v_2,v_3,v_4,v_5,v_6\}$. 
Observe that each vertex of $\{v_{10},v_{11},v_{8}\}$ has a Berge-$P_3$ to each vertex in $\{v_1,v_2,v_3,,v_4,v_5,v_6\}$.
If $e\cap \{v_{10},v_{11},v_{8}\}\ne \emptyset$, then there is a Berge-$C_4$. 
Thus we only need to consider $\{v_7,v_9\}\subseteq e$. 
But there exists a Berge-$P_3$ $(v_7,\{v_7,v_0,v_8\},v_8,\{v_8,v_{11},v_{10}\},v_{10},\{v_{10},v_0,v_9\},v_9)$ between $v_7$ and $v_9$. So any added hyperedge $e\in {{V(T'_{12})}\choose{3}}\setminus E(T'_{12})$ subject to linearity  will create a Berge-$C_4$. It implies that $T'_{12}$ is Berge-$C_4$-saturated and so  $T_{12}$ is Berge-$C_4$-saturated by Claim \ref{inducedgraph}.

When $n\equiv 13\pmod{18}$,
$T'_{13}$ is the hypergraph obtained from two copies of $T'$ by identifying $v_0$ as one, then it contains no copy of Berge-$C_4$.
For each hyperedge $e\in {{V(T'_{13})}\choose{3}}\setminus E(T'_{13})$ such that $E(T'_{13})\cup e$ is linear, note that $e$ contains at least two vertices in one copy of $T'$, which is Berge-$P_3$-connected.  If we add $e$, then it will create a Berge-$C_4$. Therefore $T'_{13}$ is Berge-$C_4$-saturated, further $T_{13}$ is Berge-$C_4$-saturated by Claim \ref{inducedgraph}.

When $n\equiv 15\pmod{18}$, let us show there is no Berge-$C_4$ in $T'_{15}$. Note that the hypergraph induced by the hyeredge set $E(T'_{15})\setminus\{v_1,v_7,v_{14}\}$ is a  subgraph of $T_1$. If there is a Berge-$C_4$, denoted by $\mathcal{C}_4$, then it contains hyperedge $\{v_1,v_7,v_{14}\}$. We first consider the case  $v_{14}\in supp( \mathcal{C}_4)$ as the first support vertex. The first and fourth hyperedges are $\{v_0,v_{13},v_{14}\}$ and $\{v_1,v_{7},v_{14}\}$, respectively. By the symmetry of $v_1$ and $v_7$, suppose the third hyperedge is $\{v_0,v_{1},v_{2}\}$. But there is no hyperedge connecting the first hyperedge and the third hyperedge except for the fourth hyperedge, a contradiction. 
Thus $v_{14}\notin supp( \mathcal{C}_4)$. Since $\{v_1,v_7,v_{14}\} \subseteq \mathcal{C}_4$ and $v_{14}\notin supp( \mathcal{C}_4)$, $\{v_1,v_{7}\}\subseteq supp( \mathcal{C}_4)$. 
Let $v_1$ be the first support vertex.  Then the first and fourth hyperedges are $\{v_1,v_{7},v_{14}\}$ and $\{v_0,v_{1},v_{2}\}$. The second hyperegde is  $\{v_0,v_{7},v_{8}\}$. But there is no hyperedge connecting the second hyperedge and the fourth hyperedge except for the first hyperedge, a contradiction. It implies that  there is no Berge-$C_4$ in $T'_{15}$. 
Since $T'_{13}$ is a subgraph of $T'_{15}$, it will create a Berge-$C_4$ to add a hyperedge, which does not contain $v_{13}$ or $v_{14}$.
Observe that each vertex in $T_{15}'$ is adjacent to $v_0$ and has a Berge-$P_2$ to $v_0$, then adding a hyperedge containing $v_{13}$ or $v_{14}$ will create a Berge-$C_4$.
Thus $T'_{15}$ is Berge-$C_4$-saturated, and so $T_{15}$ is Berge-$C_4$-saturated by Claim \ref{inducedgraph}.

When $n\equiv 16\pmod{18}$, suppose there is a Berge-$C_4$ in $T'_{16}$, denoted by $\mathcal{C}_4$. The hypergraph induced by $V(T'_{16})\setminus\{v_{13},v_{14},v_{15}\}$ is a subgraph of $T_1$, which contains no Berge-$C_4$. Thus $\big\{\{v_{13}, v_9,v_1\},\{v_{14}, v_{12},v_5\}, \{v_{15}, v_7,v_3\} \big\}\cap \mathcal{C}_4\ne \emptyset$. 
By the symmetry  of $v_{13},v_{14}$, and $v_{15}$,
we may assume that $\{v_{13}, v_9,v_1\} \subseteq \mathcal{C}_4$. 
If $v_{13}\in supp( \mathcal{C}_4)$ as the first support vertex, then the first and fourth hyperedges are $\{v_{13},v_{14},v_{15}\}$ and $\{v_1,v_{9},v_{13}\}$. By the  symmetry of $v_1$ and $v_9$, suppose the third hyperedge is $\{v_0,v_{1},v_{2}\}$ without loss of generality. But there is no hyperedge connecting the first hyperedge and the third hyperedge except for  the fourth hyperedge, a contradiction.
Then $\{v_1, v_9\}\subseteq supp( \mathcal{C}_4)$. Let $v_1$ and $v_9$ be the first and second support vertex, respectively. 
Then the second hyperedge and the fourth hyperedge are $\{v_0,v_9,v_{10}\}$ and $\{v_0,v_1,v_2\}$, but there is no hyperedge connecting them except for the first hyperedge, a contradiction. 
Hence, $T'_{16}$ contains no Berge-$C_4$. 
Since $T'_{13}$ is a subgraph of $T'_{16}$, it will create a Berge-$C_4$ to add a hyperedge, which does not contain $v_{13},v_{14},$ or $v_{15}$. 	Observe that $v_{13},v_{14},$ and $v_{15}$ have Berge-$P_2$s to $v_0$ and all other vertices are adjacent to $v_0$, then adding a hyperedge containing $v_{13},v_{14}$ or $v_{15}$ will create a Berge-$C_4$. We obtain $T'_{16}$ is Berge-$C_4$-saturated, moreover $T_{16}$ is Berge-$C_4$-saturated by Claim \ref{inducedgraph}. 

\begin{figure}[H]
\centering{
	\begin{minipage}[t]{0.48\textwidth}
		\centering
		\includegraphics[scale=0.70]{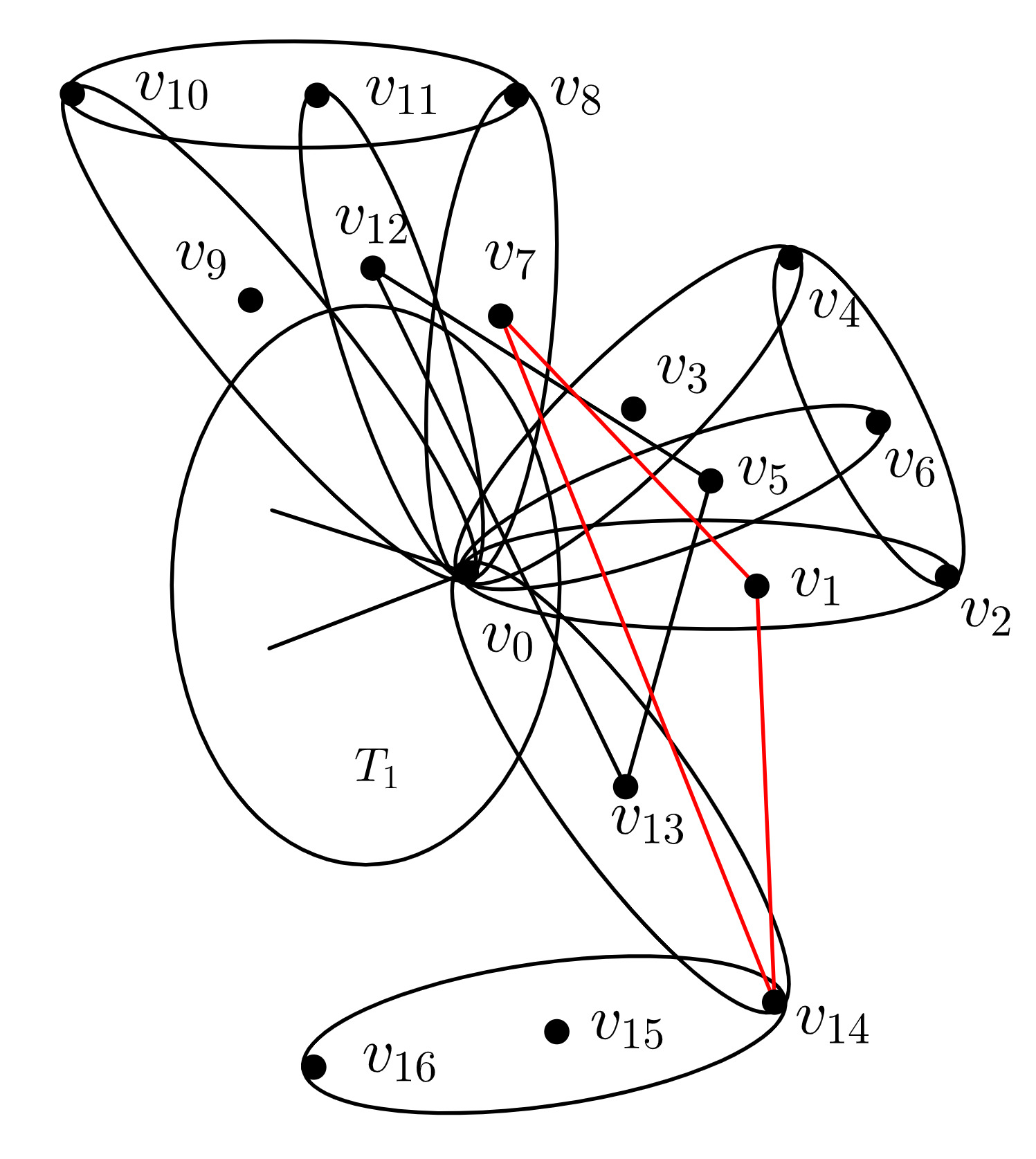}
		\caption*{$T_{17}$}
	\end{minipage}
	\begin{minipage}[t]{0.48\textwidth}
		\centering
		\includegraphics[scale=0.70]{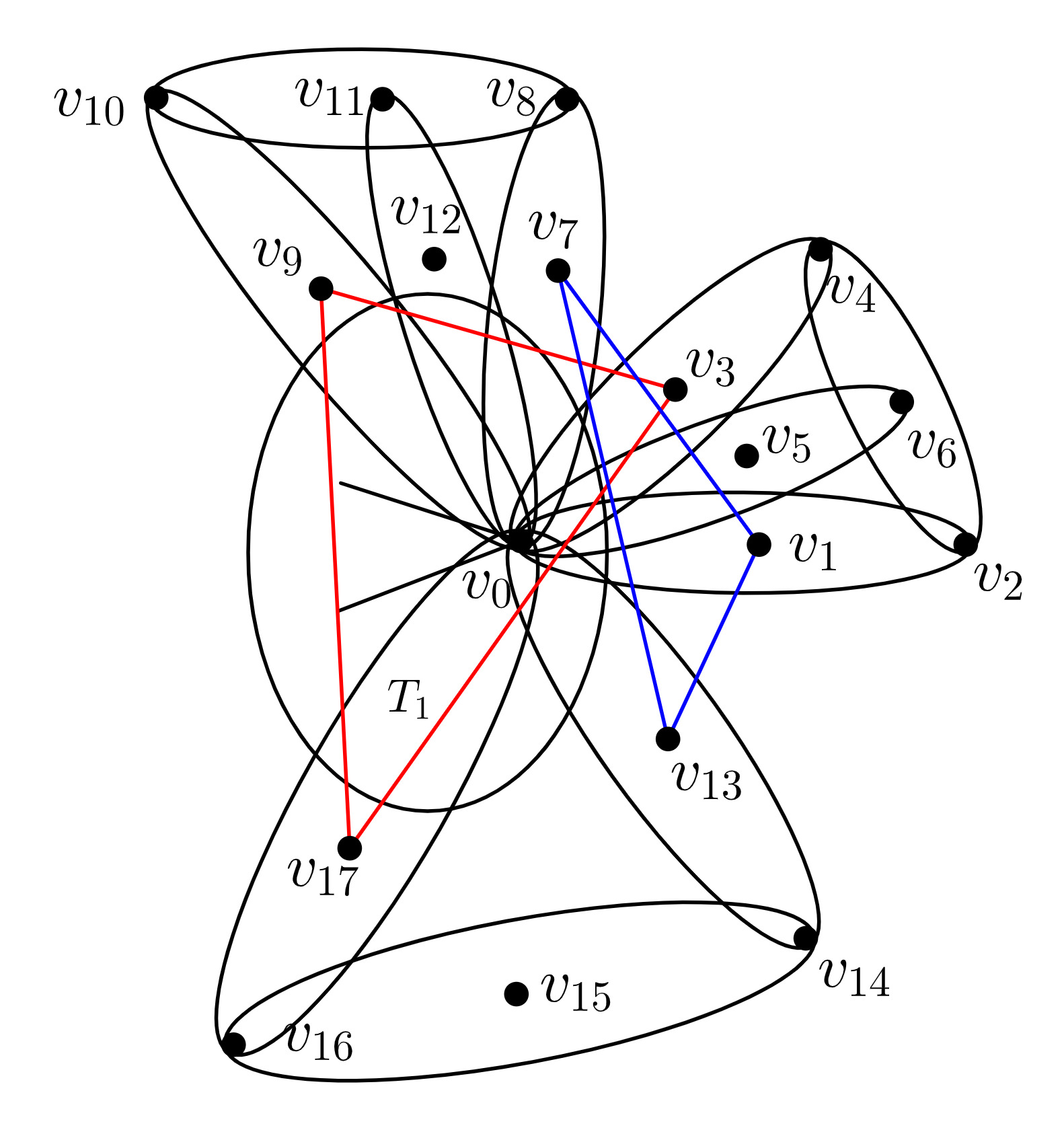}
		\caption*{$T_{18}$}
	\end{minipage}		
}
\caption{The hypergraphs $T_{17}$ and $T_{18}$}\label{T178}
\end{figure}

When $n\equiv 17\pmod{18}$, observe that $E(T'_{17})\setminus\{v_{14},v_{15},v_{16}\}$ is isomorphic to $T'_{15}$. 
If there is a Berge-$C_4$ in the hypergraph $T'_{17}$, then it contains hyperedge $\{v_{14},v_{15},v_{16}\}$, but the hyperedge is not contain in any Berge cycle, a contradiction. Thus $T'_{17}$ contains no copy of Berge-$C_4$. 
Adding a hyperedge containing no $v_{15}$ or $v_{16}$, then it will create a Berge-$C_4$ by $T'_{15}$ is Berge-$C_4$-saturated.
Note that both $v_{15}$ and $v_{16}$ have a Berge-$P_2$ to $v_0$, and any added hyperedge containing $v_{15}$ or $v_{16}$ will contain at least one vertex that is not contained by the Berge-$P_2$ and adjacent to $v_0$. Then the added hyperedge will create a Berge-$C_4$. Hence, $T'_{17}$ is Berge-$C_4$-saturated, furthermore, $T_{17}$ is Berge-$C_4$-saturated by Claim \ref{inducedgraph}.

When $n\equiv 0\pmod{18}$, 
$T'_{18}$ is a subgraph of $T_1$, then it contains no copy of Berge-$C_4$.
Next we consider that adding a hyperedge will create a Berge-$C_4$. 
Notice that the hypergraph induced by the vertex set $U'=\{v_0,v_{13},v_{14},v_{15},v_{16},v_{17}\}$ is isomorphic to a  Berge-$C_3$, which is Berge-$C_4$-saturated, and each vertex from $U'\setminus\{v_0\}$ has a Berge-$P_3$ to the vertices from $V(T'_{18})\setminus U$. Thus adding a hyperedge that contains at least one vertex from $U'$ will create a Berge-$C_4$. The hypergraph induced by $(V(T'_{18})\setminus U')\cup\{v_0\}$ is isomorphic to $T'_{13}$, which is Berge-$C_4$-saturated. 
It indicates $T'_{18}$ is Berge-$C_4$-saturated, and so $T_{18}$ is Berge-$C_4$-saturated by Claim \ref{inducedgraph}.

For $n\ge 1$, let $t=\big\lfloor\frac{n-1}{18}\big\rfloor$, and $i=n-18t$. By the definition of $i$, we have $i\in [18]$, and let 
$G=\big\lfloor\frac{n-1}{18}\big\rfloor T_1\cdot T_i'(v_0)$, where $T'_1=\{v_0\}$. 
Note that $T^*$ has 15 hyperegdes, then $e(G)=15t+e(T'_i).$ From the specific constructions of $T_i$, we have 
$$\operatorname e(T'_i)= \left\{
\begin{array}{lcl}
0, &&{i=1,2};\\
1, &&{i=3,4};\\
2, &&{i=5};\\
3, &&{i=6};\\
4, &&{i=7,8};\\
5, &&{i=9,10};\\
7, &&{i=11,12};\\
8, &&{i=13,14};\\
11, &&{i=15};\\
12,&&{i=16,17};\\
13, &&{i=18}.\\
\end{array}\right.$$
Since $t=\frac{n-i}{18}$, $e(G)=\frac{15(n-i)}{18}+e(T'_i)$. Then
$$sat_3^{lin}(n,\text{ Berge-}C_4)\le \operatorname e(G)= \left\{
\begin{array}{lcl}
\lfloor\frac{5(n-1)}{6}\rfloor, n\equiv1, 15, 16    &&{\pmod{18}};\\
\lfloor\frac{5(n-2)}{6}\rfloor, n\equiv 0, 2, 3, 4, 5, 6, 7, 17 &&{\pmod{18}};\\
\lfloor\frac{5(n-3)}{6}\rfloor, n\equiv8, 9, 10, 11, 12, 13    &&{\pmod{18}};\\
\lfloor\frac{5(n-4)}{6}\rfloor, n\equiv14    &&{\pmod{18}}.\\
\end{array}\right.
$$

$\hfill\qedsymbol$

\section{The proof of Theorem \ref{lwc4}}\label{s5}
For a linear 3-uniform hypergraph $H$, we construct an auxiliary graph $H_0$ with $V(H_0)=V(H)$ and $E(H_0)=\{v_1v_2: v_1,v_2\in e$ for some $e\in E(H)$\}. For every hyperedge $e=\{v_1, v_2, v_3\}\in E(H)$, there is a triangle $v_1v_2v_3$ in $H_0$, like Figure \ref{edge}.
\begin{figure}[H]
\centering
\includegraphics[scale=0.70]{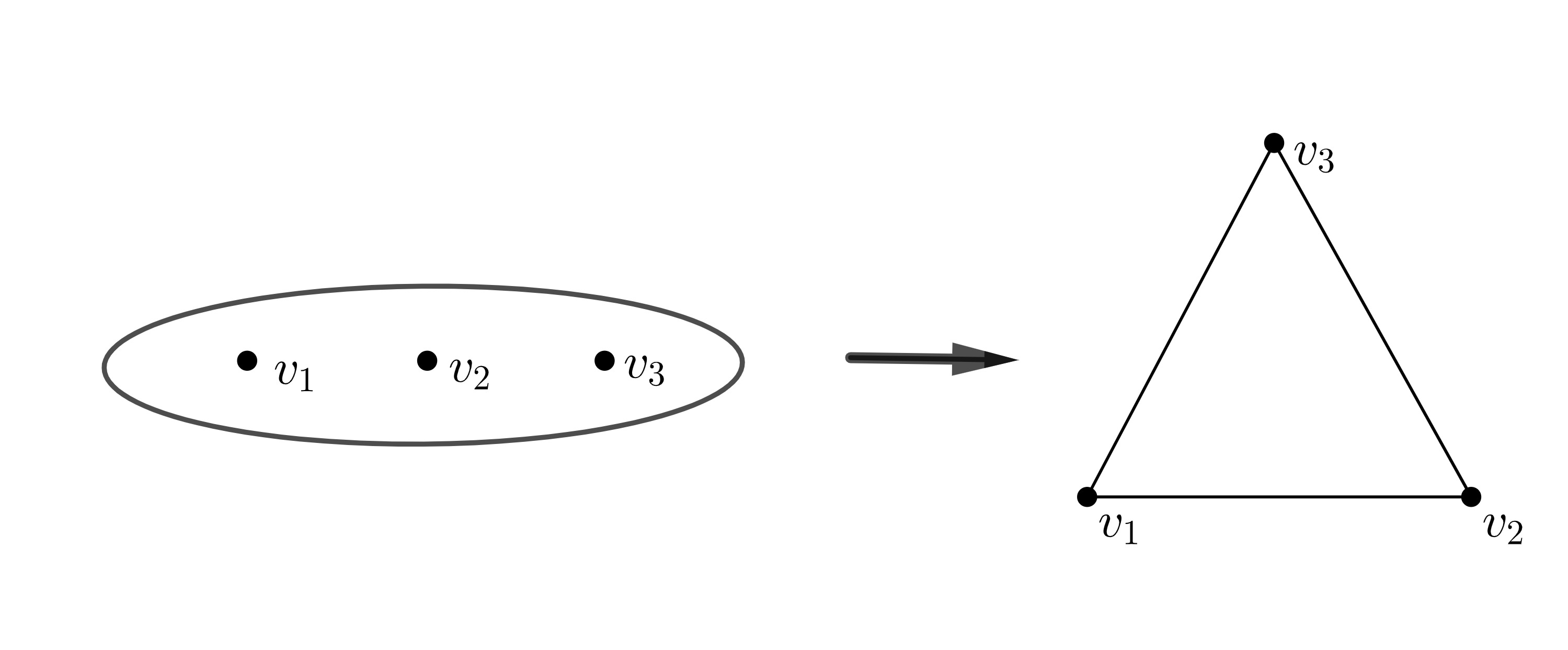}
\caption{The corresponding relation between $H$ and $H_0$}\label{edge}

\end{figure}

Since $H$ is a linear $3$-uniform hypergraph, 
we have $e(H_0)=3e(H)$ and  $d_{H_0}(v)=2d_H(v)$ for each vertex $v\in V(H)$. We can estimate the number of edges in $H_0$ to estimate the number of hyperedges in $H$.

Since $H$ is Berge-$C_4$-saturated and disconnected, each component of $H$ is Berge-$P_3$-connected. 
For any  component $H'$ of  $H$ with $n'$ vertices and $m'$ hyperedges, then the auxiliary graph $H'_0$ has $n'$ vertices and $3m'$ edges. 
Let $v_0$ be a vertex  with minimum degree   in $H'_0$. 
Since $H'$ is Berge-$P_3$-connected, $H'$ has diameter at most 3. Let $X$, $Y$, and $Z$ be the set of vertices that are distance 1, 2, and 3, respectively, from $v_0$.  Since the vertex sets $X,Y,Z$ are divided by distance, for any hyperedge $e\in E(H')$, there exist at least two vertices belong to the same class set among three vertices in $e$. We choose exactly one pair vertices in $e$ that belong to the same class set and remove the edge incident to it in $H'_0$.  Denote by $R$ the edge set of 
all removed edges in $H'_0$. 
Thus  $R\subseteq E(H_0'[X])\cup E(H_0'[Y]) \cup E(H_0'[Z]) $ and the number of removed edges is $|R|=m'$ by $H'$ is linear. 

After removing, the remained graph $H'_1=H_0'-R$ is still connected. Let $T$ be the breadth-first tree of ${H'_1}$ rooted at  $v_0$. Since the vertex set $X,Y,Z$ are divided by distance from $v_0$,   $T\subseteq E(H_1'[v_0,X])\cup E(H_1'[X,Y])\cup E(H_1'[Y,Z])$. We can see $R\cap E(T)=\emptyset$. 
We estimate the edges that are not covered by $R$ or $T$. Let ${H'_2}=H'_0-R-T$. 
When $\delta_{H'_0}\ge 6$, we have $3e(H')=e(H'_0)\geq \frac{n' \cdot \delta_{H'_0}}{2}\ge 3n'$. Next we consider the lower bound of the number of hyperedges in $H'$ when $\delta_{H'_0}=0, 2$, or $4$. 
When $\delta_{H'_0}=0$, $H_0$ is an isolated vertex. 
Therefore, we discuss $\delta_{H'_0}=2$ or $ 4$ in the proof. 
We will complete our proof by the following two cases. 

\medskip

\noindent{\bf Case 1. $\delta_{H'_0}=4$. }

\medskip

Define 
$$Y_1=\{y\in Y: N_{H'_2}(y)\cap X \ne \emptyset\}, ~
Y_2=\{y\in Y\setminus Y_1: N_{H'_2}(y)\cap  Y\ne \emptyset\},$$ 
$$Y_3=\{y\in Y\setminus(Y_1\cup Y_2): |N_{H'_0}(y)\cap Z|=1\},~\text{and }
Y_4=\{y\in Y\setminus(Y_1\cup Y_2): |N_{H'_0}(y)\cap Z|\ge 2\}.$$ 
Thus we have 
\begin{align}\label{e(XY)}
e(H_2'[X,Y])= e(H_2'[X,Y_1])\ge |Y_1|.
\end{align}
By the definition of $H'_2$, if $y\in Y_2$, then $y$ belongs to some hyperedge of the form $YYY$ and so there is a path of length 2 containing $y$ in $H'_2[Y_2]$. Therefore, for any component of $H'_2[Y_2]$, there is a path of length 2, that is any component of $H'_2[Y_2]$ has at least three vertices.  Assume that there are $\ell $ components in $H'_2[Y_2]$. Then $|Y_2|\ge 3\ell $ and 
\begin{align}\label{e(Y)}
e(H'_2[Y])\ge e(H'_2[Y_2])\ge |Y_2|-\ell \ge \frac{2|Y_2|}{3}.
\end{align}
By $\delta_{H_0'}=4$, that is $\delta_{H'}=2$, for any vertex $y\in Y\setminus(Y_1\cup Y_2)$, there is at least one hyperedge containing $y$ and a vertex in $Z$ and so  $N_{H_0'}(y)\cap Z\ne \emptyset$. Thus $Y=Y_1\cup Y_2\cup Y_3\cup Y_4$ and   for any $y\in Y_3$, there are  exactly two hyperedges of the  form $XYY$ and $YYZ$ containing $y$, respectively. Define  $Z_i=\{v\in Z: |N_{H'_0}(v)\cap Y|=i\}$ for $1\le i\le |Y|$.  We can see $|Z|=\sum_{1\le i \le |Y|}|Z_i|$. 
Let $$Y_{31}=\{y\in Y_3: N(y)\cap Z\subseteq Z_2\} \text{, } Y_{32}=\{y\in Y_3: N(y)\cap Z\subseteq  \bigcup_{3\le i\le |Y|}Z_i\},$$ and $$Z_{21}=\{z\in Z_2: z\in  N_{H'_0}(y) \text{ for some } y\in Y_{31}\}.$$
Note that for any vertex $y\in Y_3$, there exists a hyperedge  of the form $YYZ$ containing $y$. We have  $z\in \bigcup_{2\le i\le |Y|}Z_i$ for any $z\in N_{H'_0}(y)\cap Z$, where $y\in Y_3$. 
It follows that
$Y_{3}=Y_{31}\cup Y_{32}$. 
Observe that vertices in $Z$ are leaves of $T$, then we have  \begin{align}\label{e(YZ)}
e(H_2'[Y,Z])=e(H_0'[Y,Z])-|Z|=\sum_{2\le i\le |Y|}|Z_i|(i-1).
\end{align}
Notice that  $\delta_{H_0'}=4$ and $\delta_{H'}=2$, for any vertex $z\in Z_1 \cup Z_{21}$, we have that there exists at least one hyperedge of the form $ZZZ$ containing $z$ and so there exists a path of length 2 containing $z$ in $H_2'[Z]$. Assume that there are $t$ components containing at least one vertex in $Z_1\cup Z_{21}$ in $H_2'[Z]$. Denote by $Z'$ the union of these components. Then $|Z'|\ge |Z_1\cup Z_{21}|$, $|Z'|\ge 3t$ and 
\begin{align}\label{e(ZZ)}
e(H_2'[Z])\ge |Z'|-t\ge \frac{2|Z'|}{3} \ge \frac{2(|Z_1|+|Z_{21}|)}{3}.
\end{align}
By definitions of $Y_{31}$, $Z_{21}$,   $Y_{32}$, and $Y_4$,   we have $|Y_{31}|\le 2|Z_{21}|$, 
$|Y_{32}|\le \sum_{3\le i\le |Y|}|Z_i|\cdot i$, and $e(H_0'[Y,Z])=\sum_{1\le i\le |Y|}|Z_i|\cdot i\ge 2|Y_4|+|Y_{31}|+|Y_{32}|$. Then 
\begin{align}\label{Y3Y4}
|Y_3|+|Y_4|&\notag
=|Y_{31}|+|Y_{32}|+|Y_4|\\
&\notag\le \frac{1}{2}\big\{\sum_{1\le i\le |Y|}|Z_i|\cdot i+2|Z_{21}|+\sum_{3\le i\le |Y|}|Z_i|\cdot i\big\}\\
&=\sum_{3\le i\le |Y|}|Z_i|\cdot i+|Z_2|+|Z_{21}|+\frac{|Z_1|}{2}.
\end{align}
Combining  Inequations (\ref{e(XY)}), (\ref{e(Y)}), (\ref{e(YZ)}), (\ref{e(ZZ)}), and (\ref{Y3Y4}), we have 
\begin{align*}
e(H_2')&\ge  e(H_2'[X,Y])+e(H_2'[Y])+ e(H_2'[Y,Z])+e(H_2'[Z])\\
&\ge |Y_1|+\frac{2|Y_2|}{3}+\sum_{2\le i\le |Y|}|Z_i|(i-1)+\frac{2(|Z_1|+|Z_{21}|)}{3}\\
&\ge  |Y_1|+\frac{2|Y_2|}{3}+\frac{4}{9}\big(\sum_{3\le i\le |Y|}|Z_i|\cdot i+|Z_2|+|Z_{21}|+\frac{|Z_1|}{2}+\sum_{1\le i\le |Y|}|Z_i|\big)\\
&\ge \frac{4}{9}(|Y_1|+|Y_2|+|Y_3|+|Y_4|+|Z|)\\
&=\frac{4}{9}(|Y|+|Z|).
\end{align*}
Therefore, \begin{align*}
3m'&=3e(H')=e(H_0')=e(R)+e(T)+e(H_2')\\
&\ge m'+n'-1+\frac{4}{9}(|Y|+|Z|)\\
&=m'+n'-1+\frac{4}{9}(n'-5),
\end{align*}
and 
$$e(H')=m'\geq\frac{13n'-29}{18}.$$

\medskip 

\noindent{\bf Case 2. $\delta_{H_0'}=2$. }

\medskip

Let $v_0$ be a vertex of degree 2 and $N_{H_0'}(v_0)=\{x_1, x_2\}$. 
Define $$Y_1=\{y\in Y: \text{ there is no hyperedge of the form $YYY$ or $YYZ$ containing $y$} \}, $$
$$Y_2=\{y\in Y\setminus Y_1: \text{ there is a hyperedge of the form $YYY$ containing $y$}\},\text{ and  ~}$$  $$Y_3=\{y\in Y\setminus (Y_1\cup Y_2): \text{  there is a hyperedge of the form $YYZ$ containing $y$}\}.$$ 
Thus $Y=Y_1\cup Y_2\cup Y_3$. 
For any $y\in Y$, 
since $H'$ is Berge-$P_3$-connected, there is a Berge-$P_3$ between $y$ and $v_0$, saying $(y,e_1,v_2,e_2,v_3,e_3,v_0)$, where $y,v_2,v_3,v_0$ are distinct support vertices and $e_1, e_2, e_3$ are distinct hyperedges. 
By $\delta_{H'}=1$, $e_3=\{v_0,x_1, x_2\}$, that is $v_3\in \{x_1,x_2\}$, which implies that $v_2\in Y$. 
Thus $e_1$ is of the form $YYY$, $YYZ$, or $XYY$. 
For any $y\in Y_1$, $e_1$ is of the form $XYY$, 
saying $e_1=\{x_i,y,v_2\}$, then $v_3=x_{3-i}$, else $v_3=x_i$, we have $\{v_2,x_i\}\subseteq e_1\cap e_2$, a contradiction. 
Thus  $\{x_1,x_2\}\subseteq N_{H'_0}(v_2)$. By $H$ is linear, for any $v_2$ there are at most two neighbours of it  in $Y_1$. Let $Y'=\{y\in Y: \{x_1,x_2\}\subseteq N_{H_0'}(y)\}$. Then $|Y_1|\le 2|Y'|$ and 
\begin{align}\label{2eXY}
e(H_2'[X,Y])\ge  e(H_2'[X,Y'])=|Y'|\ge \frac{|Y_1|+|Y'|}{3}\ge \frac{|Y_1|}{3}.
\end{align}
For any $y\in Y_2$, there is a path of length 2 containing $y$ in $H_2'[Y_2]$ and so  each component containing $y$  in $H_2'[Y_2]$ has at least three vertices. Assume that there are $\ell$ components in $H_2'[Y_2]$. Then $|Y_2|\ge 3\ell$ and 
\begin{align}\label{2eYY}
e(H_2'[Y])\ge e(H_2'[Y_2]) \ge |Y_2|-\ell\ge \frac{2|Y_2|}{3}.
\end{align}
Let 
$$Z_1=\{z\in Z: d_{H_0'}(z)=2 \text{ and } N_{H_0'}(z)\cap Z \ne \emptyset\}, ~
Z_{11}=\{z\in Z_1: N_{H_0'}(z)\cap Z_1\ne \emptyset\},$$
$$Z_2=\{z\in Z: \text{there are at least two hyperedges of the form $YZZ$ containing $z$}\}, $$
$$Z_{12}=\bigcup_{z\in Z_2} (N_{H_0'}(z)\cap Z_1),  \text{ and } Z_{2i}=\{z\in Z_2:  |N_{H_0'}(z)\cap Z_1|=i\}
\text{ for } 0\le i \le |Z_{1}|. $$
Then $|Z_2|=\sum_{0\le i\le |Z_1|}|Z_{2i}|$. By definition of $Z$, for any $z\in Z_1$, there is exactly one hyperedge of the form $YZZ$ containing $z$.
From the definition of $Z_{11}$, the vertices in $Z_{11}$ always appear in pairs. We have the following claim. 
\begin{claim}\label{Z22}
\rm 	$|Z_{11}|\le 6$.
\end{claim}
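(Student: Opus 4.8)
The plan is to bound $|Z_{11}|$ by exploiting the very restrictive structure forced on vertices of $Z_{11}$. Recall that $Z_1$ consists of $z\in Z$ with $d_{H_0'}(z)=2$ (so $d_{H'}(z)=1$, i.e. $z$ lies in a unique hyperedge of $H'$) and $N_{H_0'}(z)\cap Z\ne\emptyset$; since the only hyperedge through $z$ must, by the distance partition, meet $Z$ in a second vertex, that hyperedge is of the form $YZZ$, say $\{y,z,z'\}$ with $y\in Y$. For $z\in Z_{11}$ we additionally have a neighbour $z''\in Z_1$ in $H_0'$; because $z$ lies in only one hyperedge, $z''$ must be the partner $z'$ of $z$ in that same hyperedge. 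So each hyperedge of the form $YZZ$ whose two $Z$-vertices both lie in $Z_1$ contributes exactly a pair to $Z_{11}$, and these hyperedges are pairwise vertex-disjoint on their $Z$-parts (a $Z_1$-vertex is in only one hyperedge). Thus $Z_{11}$ is partitioned into pairs $\{z,z'\}$, each sitting in a distinct hyperedge $\{y_j,z_j,z_j'\}$.

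Next I would use Berge-$P_3$-connectivity plus $\delta_{H'}=1$ to show there can be at most $3$ such pairs. The key observation is that the $Y$-vertices $y_j$ attached to these pairs are severely constrained. Consider two such hyperedges $\{y,z,z'\}$ and $\{\tilde y,\tilde z,\tilde z'\}$ with $z,z',\tilde z,\tilde z'\in Z_1$. Since $z$ and $\tilde z$ are nonadjacent (they lie in disjoint hyperedges) there is a Berge-$P_3$ between them; tracing it as in the argument just above for $Y_1$, and using that each of $z,\tilde z$ has degree $1$ so its unique incident hyperedge is $\{y,z,z'\}$ resp. $\{\tilde y,\tilde z,\tilde z'\}$, the path is forced to be $(z,\{y,z,z'\},\text{vertex},e_2,\text{vertex},\{\tilde y,\tilde z,\tilde z'\},\tilde z)$, so $e_2$ must share a vertex with $\{y,z,z'\}$ other than $z$ and with $\{\tilde y,\tilde z,\tilde z'\}$ other than $\tilde z$. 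Because $z',\tilde z'$ also have degree $1$, the middle support vertices cannot be $z'$ or $\tilde z'$, forcing $y,\tilde y$ to be the middle support vertices and $e_2$ to be a hyperedge through both $y$ and $\tilde y$. So any two of the $y_j$'s lie together in a common hyperedge. In a linear $3$-uniform hypergraph a set of vertices that is pairwise contained in common hyperedges, with all hyperedges distinct, has size at most $3$ (four such vertices would need $\binom{4}{2}=6$ distinct pairs but each hyperedge covers $3$ pairs and two distinct linear hyperedges share at most one pair, forcing a $K_4$-like configuration that violates linearity — more precisely, if $y_1,y_2,y_3,y_4$ all pairwise lie in hyperedges then by linearity each pair determines a unique hyperedge, and $\{y_1,y_2\}$, $\{y_1,y_3\}$ lie in hyperedges sharing $y_1$, so they share only $y_1$, meaning the $\binom{4}{2}$ pairs lie in $\binom{4}{2}$ distinct hyperedges, none of which can contain a third $y_i$, contradicting that e.g. the hyperedge through $\{y_1,y_2\}$ also meets $y_3$ or $y_4$'s hyperedges). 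Hence there are at most $3$ distinct $y_j$'s; but distinct pairs $\{z_j,z_j'\}$ could in principle share the same $y_j$, so I would also note that a fixed $y_j$ lies in at most... — here one checks via linearity that two hyperedges $\{y,z,z'\}$, $\{y,w,w'\}$ through the same $y$ are allowed, but three pairs on one $y$ plus the pairwise-common-hyperedge constraint among the $y$'s quickly exceeds what linearity and degree bounds permit, capping the total at $3$ pairs, i.e. $|Z_{11}|\le 6$.

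The main obstacle I anticipate is the bookkeeping in that last combinatorial step: cleanly ruling out configurations with many pairs on few $y$-vertices, since the ``pairwise common hyperedge'' conclusion for the $y_j$'s only directly limits the number of \emph{distinct} $y_j$'s, and one must separately bound how many $Z_{11}$-pairs can hang off each. I would handle this by a short case analysis on the number of distinct $y_j$'s (1, 2, or 3): if there is only one $y$, all pairs $\{z_j,z_j'\}$ sit in distinct hyperedges through $y$, and linearity together with the $YZZ$-form and the degree hypothesis $\delta_{H'}=1$ restricts $d_{H'}(y)$ enough to force at most $3$ such hyperedges; if there are two or three distinct $y$'s, the common-hyperedge hyperedges among them already use up incidences at each $y$, again leaving room for at most a bounded number of pendant pairs, and in every case the arithmetic lands on $|Z_{11}|\le 6$. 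Throughout I would lean on the facts established earlier in Section~\ref{s5} — that $R\cap E(T)=\emptyset$, that $Z$-vertices are leaves of $T$, and that $\delta_{H_0'}=2$ forces $\delta_{H'}=1$ — rather than re-deriving them, and I would draw a small figure of the forced Berge-$P_3$ to make the ``middle vertices must be the $y_j$'s'' deduction transparent.
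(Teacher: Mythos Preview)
Your setup is correct and matches the paper: each vertex of $Z_{11}$ has degree $1$ in $H'$, its unique hyperedge is of the form $\{y_j,z_j,z_j'\}$ with $z_j,z_j'\in Z_{11}$, and the Berge-$P_3$ between two nonadjacent $Z_{11}$-vertices is forced to pass through the corresponding $y$-vertices, so any two of the $y_j$'s lie together in some hyperedge $e_{ij}$.

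The gap is in how you finish. You try to conclude that at most three distinct $y_j$'s can be pairwise covered by hyperedges using \emph{linearity alone}, but this is false: take four vertices $y_1,\dots,y_4$ and six hyperedges $\{y_i,y_j,w_{ij}\}$ with fresh auxiliary vertices $w_{ij}$ --- this is a linear $3$-uniform hypergraph in which all four $y_j$'s are pairwise covered. Your parenthetical about a ``$K_4$-like configuration that violates linearity'' does not produce a contradiction. The hypothesis you never invoke, and which is essential here, is that $H'$ contains no Berge-$C_4$. The paper's proof assumes four pairs (so $|Z_{11}|\ge 8$) and then exhibits an explicit Berge-$C_4$: if no $e_{ij}$ contains a third $y_k$, then $(y_1,e_{12},y_2,e_{23},y_3,e_{34},y_4,e_{14})$ is a Berge-$C_4$; if some $e_{ij}$ does contain a third $y_k$, say $e_{12}=\{y_1,y_2,y_3\}$, one brings in the hyperedges from $x_1,x_2$ to the $y_i$'s (via the distance partition) and again builds a Berge-$C_4$. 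This second case is where the structure of $X=\{x_1,x_2\}$ and the edge $\{v_0,x_1,x_2\}$ enters, and you have no substitute for it.

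Your fallback case analysis on the number of distinct $y_j$'s also does not work. First, the $y_j$'s are in fact forced to be distinct: if $y_1=y_2=y$ then the Berge-$P_3$ from $z_{11}$ to $z_{21}$ would need a middle hyperedge meeting both $\{y,z_{11},z_{12}\}$ and $\{y,z_{21},z_{22}\}$ at support vertices other than $z_{11},z_{21}$, but $z_{12},z_{22}$ have degree $1$ and the only remaining choice collapses the path --- so Berge-$P_3$-connectivity already rules this out, and the ``many pairs on one $y$'' scenario never arises. Second, even if it did, the sentence ``$\delta_{H'}=1$ restricts $d_{H'}(y)$ enough to force at most $3$ such hyperedges'' is wrong: a minimum-degree hypothesis gives no upper bound on $d_{H'}(y)$. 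To repair the proof, replace the linearity argument by the explicit Berge-$C_4$ constructions above.
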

\noindent{\textit{Proof.}}   Suppose to the contrary that $|Z_{11}|\ge 8$, and let the hyperedges containing them  be $e_1=\{y_1, z_{11},z_{12}\}, e_2=\{y_2, z_{21},z_{22}\}, e_3=\{y_3, z_{31},z_{32}\}, \text{  and } e_4=\{y_4, z_{41},z_{42}\}$, where $z_{i'j'}\in Z_{11}$ for any $i',j'\in [2]$. Since there is a Berge-$P_3$ between the nonadjacent vertices with degree 2 in $Z$, saying $z_{i1}$ and $z_{j1}$,  there exists a hyperedge containing $y_i$ and $y_j$ for any $i,j\in [4]$ and $i\ne j$. 
Let $e_{ij}$ be the hyperedge containing $y_i$ and $y_j$. 
If there is no hyperedge containing three vertices of $y_1,y_2,y_3,y_4$, that is $e_{ij}\ne e_{tp}$ for any $i,j,t,p\in [4]$ with $\{i,j\}\ne \{t,p\}$. It can be seen $(y_1,e_{12},y_2,e_{23},y_3,e_{34},y_4,e_{14})$ is a Berge-$C_4$ in $H'$, a contradiction. 
Next we assume that  there exists a hyperedge containing three vertices of $y_1,y_2,y_3,y_4$, saying $e_{12}=e_{23}=e_{13}$, then $\{y_1,y_2,y_3\} \subseteq N_{H_0'}(x_1)\setminus N_{H_0'}(x_2)$ or $\{y_1,y_2,y_3\} \subseteq N_{H_0'}(x_2)\setminus N_{H_0'}(x_1)$. 
By contradiction, suppose $y_1\in N_{H_0'}(x_1)$ and $y_2\in N_{H_0'}(x_2)$. Let $e'_i$ be the hyperedge containing $y_i$ and $x_i$ for $i\in [2]$. We can see $(x_1,\{v_0,x_1,x_2\},x_2,e'_2,y_2,e_{12},y_1,e'_1)$ is a Berge-$C_4$ in $H'$, a contradiction. Without loss of generality, we can assume  $\{y_1,y_2,y_3\} \subseteq N_{H_0'}(x_1)\setminus N_{H_0'}(x_2)$. Let the hyperedge containing $\{x_1, y_i\}$ be $e^{i}$ for any $i\in [3]$. Since $e_{12}=e_{23}=e_{13}=\{y_1,y_2,y_3\}$ and $H'$ is linear, $e^{i}\ne e^{j}$ for any $i,j\in [3]$ and$i\ne j$. 
If $e_{i4}=e^i$ for some $i\in [3]$, saying $e_{34}=e^3$, that is  $e_{34}=\{x_1,y_3,y_4\}$, then $e_{j4}\ne e^j$ for any $j\in [2]$, else $e_{j4}=\{x_1,y_j,y_4\}$, we have $e_{j4}\cap e_{34}=\{x_1, y_4\}$, which contradicts the linearity of $H'$.
Thus we can always assume $e^j\ne e_{j4}$ for any $j\in [2]$.  
We can see $(x_1,e^1,y_1,e_{14},y_4,e_{24},y_2,e^2)$ is a Berge-$C_4$, a contradiction. 
Thus $|Z_{11}|\le 6$. 
$\hfill\qedsymbol$

\begin{claim}\label{Z12}
For any $z\in Z$ with  $|N_{H_0'}(z)\cap Z_1|\ge 2$, then $z\in Z_2$.
\end{claim}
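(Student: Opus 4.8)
The plan is to read off the conclusion from linearity of $H'$ together with the structure of the vertices in $Z_1$. Fix $z\in Z$ with $|N_{H_0'}(z)\cap Z_1|\ge 2$ and choose two distinct vertices $z_1,z_2\in N_{H_0'}(z)\cap Z_1$. First I would invoke linearity: since $z_i$ is a neighbour of $z$ in $H_0'$, the pair $\{z,z_i\}$ lies in a common hyperedge of $H'$, and exactly one such hyperedge, which I call $e_i$ (for $i=1,2$).

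The key step is to verify that $e_1\neq e_2$. Suppose instead $e_1=e_2$; then this single hyperedge contains $z,z_1,z_2$, all of which lie in $Z$, so it is of the form $ZZZ$. On the other hand $z_1\in Z_1$ satisfies $d_{H_0'}(z_1)=2$, so $e_1$ is the \emph{only} hyperedge containing $z_1$; but, as recorded just before this claim, the unique hyperedge through a vertex of $Z_1$ has the form $YZZ$, not $ZZZ$ --- a contradiction. (If one prefers a self-contained justification of that sub-fact: a hyperedge through $z_1$ with all three vertices in $Z$ cannot be extended to a Berge-$P_3$ from $z_1$ to $v_0$, since the last hyperedge of such a path must be $\{v_0,x_1,x_2\}$, which forces an $X$-vertex into the middle hyperedge and hence places a vertex of $Z$ at distance at most $2$ from $v_0$, which is impossible.)

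With $e_1\neq e_2$ in hand I would finish quickly: for each $i$, $e_i$ is the unique hyperedge containing $z_i\in Z_1$ and is therefore of the form $YZZ$. Hence $z$ lies in two distinct $YZZ$-hyperedges $e_1$ and $e_2$, which is precisely the defining condition for $z\in Z_2$. I do not anticipate a real obstacle here: the only point that needs any argument is the inequality $e_1\neq e_2$, and that is exactly where the defining properties of $Z_1$ (degree $2$ in $H_0'$, and unique incident hyperedge of type $YZZ$) enter.
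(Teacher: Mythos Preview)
Your proof is correct and follows essentially the same approach as the paper's: both pick two distinct neighbours $z_1,z_2\in Z_1$ of $z$, note that the unique hyperedge through each $z_i$ is of type $YZZ$ and contains $z$, and conclude $z\in Z_2$. Your version is in fact more careful than the paper's, since you explicitly verify that $e_1\neq e_2$, a point the paper leaves implicit.
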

\noindent{\textit{Proof.}}
For any $z\in Z$ with  $|N_{H_0'}(z)\cap Z_1|\ge 2$, saying $\{z_1,z_2\}\subseteq N_{H_0'}(z)\cap Z_1$,  by the definition of $Z_1$, then there are at least two hyperedges of the form $YZZ$ containing $\{z,z_1\}$ and $\{z,z_2\}$, respectively. It implies that $z\in Z_2$. 
$\hfill\qedsymbol$

\medskip 

We choose an edge set $F_1$ from $E(H_2'[Y,Z_2])$. Let $F_1=\{yz \in E(H_2'[Y,Z_2]): $ there exists some vertex $z_1\in Z$ such that $\{y,z,z_1\}$ is a hyperedge in $H'$\}. 
Then  $|Z_{12}|=  \sum_{1\le i \le |Z_{1}|}i\cdot|Z_{2i}|$ and 
\begin{align}\label{eF2}
|F_1|&\notag \ge  \sum_{2\le i \le |Z_{1}|}(i-1)|Z_{2i}|+|Z_{21}|+|Z_{20}|\\
&\notag \ge \frac{1}{3}( \sum_{1\le i \le |Z_{1}|}i\cdot|Z_{2i}|+ \sum_{0\le i \le |Z_{1}|}|Z_{2i}|)\\
&\ge \frac{1}{3}( |Z_{12}|+|Z_2|).
\end{align}
Let $$Z_{3}=\{z\in Z\setminus (Z_1\cup Z_2): \text{there exists at least one hyperedge of the form $YYZ$ containing } z\},$$ $$Z_{13}=\cup_{z\in Z_3}(N_{H_0'}(z)\cap Z_1),~
Z_{31}=\{z\in Z_3: N_{H_0'}(z)\cap Z_1 \ne \emptyset\},~Z_{32}=Z_3\setminus Z_{31},$$
$$Z_{3i}^{j}=\{z\in Z_{3i}: \text{ there are $j$ hyperedges of the form $YYZ$ containing } z\}$$ for $i\in [2]$ and $1\le j \le |Y|^2.$
Then $|Y_3|\le \sum_{1\le j\le |Y|^2} (|Z^j_{31}|+|Z^j_{32}|)\cdot 2j$.  For any vertex $z_0\in Z_{13}$, we have $z_0\notin Z_{12}$ by definition. 
For any vertex $z'\in N_{H_0'}(z_0)\cap Z$,  by Claim \ref{Z12}, then $|N_{H_0'}(z')\cap Z_{13}|=1$
and so $|Z_{13}|=|Z_{31}|=\sum_{1\le j\le |Y|^2}|Z_{31}^j|$.
Now we choose an edge set from $E(H_2'[Y,Z_3])$, denoted by $F_2$. Let $F_2=\{yz \in e(H_2'[Y,Z_3]): \{z,y,y_1\}$ or $\{y,z,z_1\}$ is a hyperedge, where $y,y_1\in Y$, $z\in Z_3$ and $z_1\in Z_{13}$\}.
Thus 
\begin{align}\label{eF}
|F_2|& \notag=\sum_{1\le j\le |Y|^2}(2j+1-1)|Z_{31}^j|+\sum_{1\le j\le |Y|^2}(2j-1)|Z_{32}^j|
\\
&
\notag\ge \frac{1}{3}\big\{\sum_{1\le j\le |Y|^2}2j\cdot( |Z_{31}^j|+ |Z_{32}^j|)+\sum_{1\le j\le |Y|^2}(|Z_{31}^j|+|Z_{32}^j|)+\sum_{1\le j\le |Y|^2}|Z_{31}^j|\big\}\\
&\ge \frac{1}{3}(|Y_3|+|Z_{3}|+|Z_{13}|).
\end{align}

Next we consider the vertices in $Z_{14}=Z_1\setminus (Z_{11}\cup Z_{12}\cup Z_{13})$. For any  $z\in Z_{14}$, saying $Z\cap N_{H_0'}(z)=\{z'\}$, then $d_{H_0'}(z')\ge 4$, one hyperedge containing $z'$ is of the form $YZZ$ which contains $z$, 
the second hyperedge containing $z'$  is of the form $YYZ$, $YZZ$, or $ZZZ$. By definition of $Z_{14}$,  we have the second hyperedge containing $z'$ is of the form $ZZZ$. Let $$Z_4=   Z\setminus(Z_1\cup Z_2\cup Z_3),$$  and  $$Z'_4= \{z\in Z: \text{ there is at least one hyperedge of the form $ZZZ$ containing $z$}\}.$$ 
Then $N_{H_0'}(z_1)\cap Z\subseteq Z_4$  and  $N_{H_0'}(z_1)\ne N_{H_0'}(z_2)$ for any $z_1,z_2\in Z_{14}$, which implies that
$|Z_{14}|\le |Z_{4}|$. 
For any $z\in Z_4$, since $z\notin Z_1\cup Z_2\cup Z_3$, there is at least one hyperedge of the form $ZZZ$ containing $z$, that is, $Z_4\subset Z'_4$. For each vertex in $H_2'[Z'_4]$, there is a path of length 2 containing it in $H_2'[Z'_4]$ and so there is a path of length 2 in each component of $H_2'[Z'_4]$. Assume that there $t$ components in $H_2'[Z_4]$. Then $|Z'_4|\ge 3t$ and 
\begin{align}\label{eZ}
e(H_2'[Z])=e(H_2'[Z'_4])\ge |Z'_4|-t\ge \frac{2}{3}|Z'_4|\ge \frac{2}{3}|Z_4|\ge \frac{1}{3}(|Z_4|+|Z_{14}|).
\end{align}
Note that $|Y|+|Z|=n'-|X|-1=n'-3$, 
$|Y|=|Y_1|+|Y_2|+|Y_3|$, $|Z_{11}|\le 6$, and  $$|Z|=|Z_1|+|Z_2|+|Z_3|+|Z_4|=|Z_{11}|+|Z_{12}|+|Z_{13}|+|Z_{14}|+|Z_2|+|Z_{3}|+|Z_{4}|.$$
Combine Inequations (\ref{2eXY}), (\ref{2eYY}), (\ref{eF2}), (\ref{eF}), (\ref{eZ}), we obtain
\begin{align*}
3m'&=3e(H')=e(H_0')=e(R)+e(T)+e(H_2')\\
&\ge m'+n'-1+e(H_2'[X,Y])+e(H_2'[Y])+|F_1|+|F_2|+e(H_2'[Z_4])
\\
&\ge m'+n'-1+\frac{1}{3}(|Y_1|+2|Y_2|+|Z_2|+|Z_{12}|+
|Y_3|+|Z_3|+|Z_{13}|+|Z_{4}|+|Z_{14}|)\\
&\ge m'+n'-1+\frac{1}{3}(|Y|+|Z|-|Z_{11}|)\\
&\ge  m'+n'-1+\frac{1}{3}(n'-9),
\end{align*}
and 
$$e(H')\ge \frac{2n'}{3}-2.$$
Thus 
\begin{align}\label{c4com}
e(H')\geq \begin{cases}
	0, &\text{ when $\delta(H')= 0$;}\\
	\frac{2n'}{3} -2,  &\text{ when $\delta(H')= 1$;}\\
	\frac{13n'-29}{18}, &\text{ when $\delta(H') = 2$;}\\
	n', &\text{ when $\delta(H')\ge 3$.}
\end{cases} 
\end{align}

Observe that the number of components of a Berge-$C_4$-saturated linear $3$-uniform hypergraph is at most 2. Otherwise, we can find a new hyperedge by choosing a vertex from three components, which will not create a Berge-$C_4$, a contradiction.
Assume the two components of $H$ are $H_1$ and $H_2$. By Inequation (\ref{c4com}), 
$e(H_i)\ge \frac{2|V(H_i)|}{3}-2$ for any $i\in [2]$ and so $e(H)\ge \frac{2n}{3}-4.$

$\hfill\qedsymbol$

\end{document}